\numberwithin{equation}{section}
\newcommand{\e}{\varepsilon}
\newcommand{\wto}{\rightharpoonup}
\newcommand{\R}{\mathbb{R}}
\newcommand{\Z}{\mathbb{Z}}
\newcommand{\ol}{\overline}
\renewcommand\O{{\Omega}}
\renewcommand{\H}{\mathcal{H}^2}
\newcommand{\mthree}{\mathbb{M}^{3{\times}3}}
\newcommand{\msym}{\mathbb{M}^{3{\times}3}_{\rm sym}}
\newcommand{\id}{{\rm \bf{id}}}
\newcommand\p{\partial}
\newcommand\vece{{\vec{e}_r}}\newcommand\ds{\displaystyle}
\newcommand\I{\mathscr I}
\newcommand\J{\mathscr J}
\newcommand\G{\mathscr G}
\newcommand\F{\mathscr F}
\newcommand\E{\mathscr E}
\newcommand\bE{\mathbf E}
\newcommand\X{{\mathscr X}}
\newcommand\Os{{\O\setminus \ol B_a}}
\newcommand\A{\mathcal A}
\newcommand\Oe{\O^\e}
\newcommand\ue{u^\e}
\newcommand\hue{{\hat u}^\e}
\newcommand\hwe{{\hat w}^\e}
\newcommand\Bie{B^i_{\e a}}
\newcommand\bBie{{\ol B}^i_{\e a}}
\newcommand\lfl {\lambda_{f\!\ell}}
\newcommand{\Dir}{\Gamma}
\newcommand{\Aa}{\mathbb A}
\newcommand{\Aah}{\mathbb A_{\rm hom}}
\newcommand{\chiF}{\lambda_F}
\newcommand{\chiij}{\lambda_{ij}}
\newcommand{\chiFij}{\lambda_{F_{ij}}}
\newcommand{\chijk}{\lambda_{jk}}
\newcommand\Ie{I_\e}
\newcommand\ba{\begin{array}}
\newcommand\ea{\end{array}}
\renewcommand\oe{\omega^\e}
\newcommand\K{\tilde\Omega^\e}
\renewcommand\phi{\varphi}
\theoremstyle{plain}
\newtheorem{theorem}{Theorem}[section]
\newtheorem{lemma}[theorem]{Lemma}
\newtheorem{proposition}[theorem]{Proposition}
\theoremstyle{definition}
\newtheorem{remark}[theorem]{Remark}
\DeclareMathOperator{\dist}{dist}
\DeclareMathOperator{\cof}{cof}
\DeclareMathOperator{\dive}{div}
\DeclareMathOperator{\tr}{tr}
\def \into {\int_\Omega}
\def \ep {\varepsilon}
\def \Om {\Omega}
\def \ph {\varphi}
\def \RR {\mathbb R}
\def \A {\mathcal{A}}
\def \F {\mathcal{F}}
\def \G {\mathcal{G}}
\def \beq {\begin{equation}}
\def \eeq {\end{equation}}
\def \ba {\begin{array}}
\def \ea {\end{array}}
\begin{document}
\title[Liquid filled perforated elastomers]{{Liquid Filled Elastomers}: From Linearization to Elastic Enhancement}

\author[J. Casado D\'iaz] {Juan Casado D\'iaz}
\address[J. Casado D\'iaz]{ Departamento de Ecuaciones Diferenciales y An\'alisis Num\'erico, Universidad de Sevilla, Campus de Reina Mercedes,
41012 Sevilla, Spain}
\email[J. Casado D\'iaz]{jcasadod@us.es}

\author[G.A. Francfort] {Gilles A. Francfort}
\address[G.A. Francfort]{Flatiron Institute, 162 Fifth Avenue, New York, NY10010, USA}
\email[G. A. Francfort]{gfrancfort@flatironinstitute.org}
\author[O. Lopez-Pamies] {Oscar Lopez-Pamies}
\address[O. Lopez-Pamies]{Department of Civil and Environmental Engineering,  University of Illinois at Urbana--Champaign, IL 61801, USA}
\email[O. Lopez-Pamies]{pamies@illinois.edu}
\author[M.G. Mora] {Maria Giovanna Mora}
\address[M.G. Mora]{Dipartimento di Matematica, Universit\`a di Pavia, Via Ferrata 5, 27100 Pavia, Italy}
\email{mariagiovanna.mora@unipv.it}

\begin{abstract}
Surface tension at  cavity walls can play havoc with the mechanical properties of perforated soft solids when the cavities are filled with a fluid. This study is an investigation of the macroscopic elastic properties of elastomers embedding spherical cavities filled with a pressurized liquid in the presence of surface tension, starting with the linearization of the fully nonlinear model and ending with the enhancement properties of the linearized model when many such liquid filled cavities are present.
\end{abstract}

\maketitle

\section{Introduction}

The study of the mechanics  of interfaces in the continuum has a long and rich history with origins dating back to the classical works of Young \cite{Young1805} and Laplace \cite{Laplace1806} on interfaces between fluids in the early 1800's  and of Gibbs \cite{Gibbs1928} on the more general case of interfaces between solids and fluids in the 1870's. Yet it was only in 1975 that complete descriptions of the kinematics, the concept of interfacial stress, and the balance of linear and angular momenta of bodies containing interfaces were properly formulated, even when specialized to the basic  case of elastic interfaces \cite{Gurtin75a,Gurtin75b}.    The results remained abstract at the time, most certainly because of the technical difficulties in measuring and tailoring  mechanical and physical properties of interfaces. In the early 2000's,  the onset of new synthesis and characterization tools reinvigorated the study of interfaces in soft matter.

In this context, elastomers filled with liquid --- as opposed to solid --- inclusions are a  recent trend in the soft matter community because they  exhibit remarkable mechanical and physical properties; see e.g. \cite{Syleetal15,LDLP17,Yunetal19}. In particular, the interfacial physics in these soft material systems can be actively tailored to  enhance or impede deformability. While the addition of liquid inclusions should increase the macroscopic deformability of the material,  the behavior of the solid/liquid interfaces, if negligible when the inclusions are ``large'', may counteract this increase and lead to stiffening when the inclusions become sufficiently ``small''.

As a first step in our understanding of this  paradigm, a recent contribution by one of us \cite{GLP} derives the governing equations that describe the mechanical response of a hyperelastic solid filled with initially spherical inclusions made of a pressurized hyperelastic fluid when the solid/fluid interface is hyperelastic and possesses an initial surface tension. Arguably, this corresponds to the most basic type of elastomer filled with liquid inclusions.

From a mechanics standpoint, the main objectives of this work are twofold. First, we derive the linearization of the governing equations put forth in \cite{GLP} in the limit of small deformations. Second, within that linearized setting, we derive the homogenization limit of a periodic distribution of liquid inclusions as the period gets smaller. Formal derivations of both results were proposed in \cite{GLP,GLLP}. Our analysis corroborates those, although even an attentive reader may be at pains to check that the results are identical because of differential geometric intricacies.

From a mathematical standpoint, given an elastic energy density $W$ and an interaction surface term $\J$ on the boundary of a ball $B_a\subset \O$ that will be detailed below, we propose to linearize the energy
$$
\E_\e (y)=\int_\Os W(\nabla y)\, dx + \J(y) -\e \int_\Os f\cdot y\, dx
$$
when the external load (here $\e f$)  is indeed of order $\e$, $\e$ being a small parameter. This is by now a classical problem that was first  handled in \cite{DMNP} for  { finite elasticity} by computing the $\Gamma(L^2)$-limit of
$$
\I_\e(u):=\frac1{\e^2}\int_\O W(I+\e \nabla u)\; dx
$$
{for a standard elastic energy density $W$} and showing the $L^2$-compactness of almost minimizers of $v\mapsto\I_\e(v)-\int_\O f\cdot v\; dx.$
The celebrated rigidity result of \cite{FJM} plays a pivotal role in the analysis.

Since \cite{DMNP}, a similar linearization process has been implemented in a variety of settings generally assuming that the relevant forces were of order $\e$ and rescaling the energy accordingly as above. In that spirit, the work which is closest to the current investigation is \cite{MR} where live pressure loads are applied to an elastic body.

The current setting introduces a new feature in the analysis, namely a pre-stress due to the liquid inclusions. This in turn changes the order of the various contributions and results in a breakdown of the $\Gamma$-convergence process. We were unfortunately unable to complete that process without the addition of a vanishing higher order contribution to the energy. This is because, barring the presence of such an additional term, we cannot hope to get a bound on the $L^2$-norm of the tangential gradient of the (rescaled) field along the boundary --- that is, the solid/liquid interface --- of the liquid cavity (a sphere). Consequently, we are adding an appropriately vanishing second order term to our energy (see \eqref{eq.reg}, \eqref{eta_e}). A similar technique was recently used in \cite{ADMLP} to handle the linearization of a multi-well elastic energy.

Our first  result is the linearization Theorem~\ref{thm} which gives rise from a P.D.E. standpoint to a highly non trivial set of equations both on the solid part of the domain and on the boundary of the liquid inclusions,  that is, along the solid/liquid interface (see Remark \ref{rem.PDE}).

Our second result, Theorem~\ref{thm.hom}, is a periodic homogenization statement on the linearized system with an appropriate rescaling of the surface tension on each inclusion. The resulting homogenized behavior ends up being purely elastic.  However, the expression for the homogenized Hooke's law incorporates  a memory of the presence of surface tension on the solid/liquid interfaces. This is achieved through a somewhat intricate unfolding of the oscillating fields which heavily draws on the specific spectral properties of spherical harmonics and, to our knowledge is the first time a homogenization process is performed on a system that couples bulk and surface P.D.E.'s.

The homogenization result  promotes elastic enhancement  as detailed in Subsection \ref{sec.enhance}. Technical hurdles prevent the derivation of a general enhancement result so we have to illustrate its occurrence on a uniaxial strain and for an isotropic base material; see Proposition \ref{prop.enhance}. We are confident that enhancement
can always be achieved for large enough surface tensions, although currently defeated in our attempts to prove such a general statement.

\medskip

Now for a few mathematical prerequisites. We recall that, for any $C^1$-manifold $M$ embedded in an open set $\O\subset\R^3$ and any smooth field  $u:\O\to \R^3$, the tangential gradient $\nabla_\tau u$ of $u$ at $x\in M$ is defined as
$$
\nabla_\tau u(x)= \nabla u(x)-\nabla u(x)\nu(x)\otimes\nu(x)
$$
where $\nu(x)$ is the unit normal to $M$ at $x$. Similarly, the tangential divergence of  $u$ at $x\in M$  is defined as
$$
\dive_\tau u(x)=\dive u(x)-(\nabla u)^T(x)\nu(x)\cdot\nu(x).
$$

Note that,  for any smooth vector field $v$ on a smooth oriented manifold $M$ with normal unit vector $\nu$,
\begin{equation}\label{eq.tau-nabla-div}
\nabla_\tau(\dive_\tau v)= (I-\nu\otimes\nu)\dive_\tau (\nabla_\tau v)^T-\nabla_\tau \nu(\nabla_\tau v)^T\nu,
\end{equation}
where the tangential divergence of a tensor-valued function $S$ is defined as $\dive_\tau S\cdot e=\dive_\tau(S^Te)$ for any vector $e$
(see \cite[Lemma~2.6]{CRMSP}).

We also recall a few useful algebraic identities. In the following $\mthree$ stands for the space of $(3\times 3)$-matrices with $I$ as the identity matrix
and $\msym$ for the subspace of symmetric $(3\times 3)$-matrices.

For any $A,B\in\mthree$ with $\det A\neq0$ one has
\begin{equation}\label{eq.alg1}
\cof(A+B)=\cof A +\cof B +\frac1{\det A} \big( (\cof A\cdot B)\cof A - (\cof A) B^T(\cof A) \big)
\end{equation}
(see, e.g, \cite[Proposition~1.6]{PGVC}).

Also Cayley-Hamilton Theorem implies that, for any $A\in\mthree$,
$$\frac12\big( (\tr A)^2-\tr A^2\big)A-(\tr A)A^2+A^3=(\det A) I$$
so that
\begin{equation}\label{eq.alg2}
\cof A= \frac12\big( (\tr A)^2-\tr A^2\big)I-(\tr A)A^T+(A^T)^2.
\end{equation}
from which we also obtain that
\begin{equation}\label{eq.alg3}
\tr(\cof A)=\frac12\big((\tr A)^2- \tr A^2\big).
\end{equation}

Finally, if $\{\tau_1,\tau_2,\nu\}$ form an orthonormal basis of  vectors  with $\tau_1\times\tau_2=\nu$,
 \begin{equation}\label{eq.alg4}(\cof A)\nu=A\tau_1\times A\tau_2 \quad \mbox{ for every }A\in\mthree.
 \end{equation}

 Notationwise, we denote by $B_r$ the open ball of center $0$ and radius $r>0$ and by $\id$ the identity mapping   ($\id(x)=x$).
 Also, throughout $\{\vec{e}_i, i=1,2,3\}$ is the canonical orthonormal basis of $\R^3$ and $\vece$ is the unit  normal at the point under consideration on any sphere. 

We always omit the target space when writing a norm, so, for example, if $u:\O\to \R^3$, $\|u\|_{L^2(\O)}$ is the $L^2(\O;\R^3)$-norm of $u$.

We  say that a sequence $(a_\e)\subset\R$ indexed by $\e\searrow0$ is of the order of $\e^\alpha$ (and write $a_\e \simeq \e^\alpha$) if there exist two positive constants $c,c'$ such that $c\e^\alpha\le a_\e\le c'\e^\alpha$ for all $\e$. The symbol $\subset\!\subset$ means ``compactly contained in"
and  the symbol $\sharp$ stands for ``periodic". The characteristic function of a set $A$ is denoted by $\chi_A$.

Finally, ``$\cdot$" stands for the Euclidean inner product on $\R^3$ as well as for the Frobenius inner product on matrices, i.e. $A\cdot B:= {\rm tr }(B^TA)$ for $A,B\in \mthree$.

\section{The nonlinear formulation}

\subsection{Setting}
Let $\O\subset\R^3$ be a Lipschitz bounded domain containing a closed ball $\ol B_a$ with $a>0$.

The elastomer occupying  $\O\setminus \ol B_a$ is assumed to possess an elastic energy density
$W:\mthree\to[0,+\infty]$ that satisfies what are by now the usual assumptions of geometrically non-linear elasticity theory, namely,
\begin{itemize}
\item[(i)] $W(F)=W(RF)$ for every $F\in\mthree$ and $R\in SO(3)$,\smallskip
\item[(ii)] $W(I)=0$, $W(F)= +\infty$ if $\det F\le 0$,\smallskip
\item[(iii)] $W(F)\geq c\dist^2(F, SO(3))$ for every $F\in\mthree$, for some $c>0$,\smallskip
\item[(iv)] $W$ is $C^2$ in a neighborhood of $SO(3)$ and $\ds \Aa:={\p^2 W}/{\p F^2}(I)$, \smallskip
\item[(v)] $W(F)\to +\infty$ as $\det F\to 0^+$.\smallskip
\end{itemize}

\begin{remark}\label{rm.positdef}
The assumptions on $W$ imply that the quadratic form $F\mapsto Q(F):=\Aa F\cdot F$ is positive definite on symmetric matrices.
\hfill\P
\end{remark}

The ball $B_a$ is filled with a compressible pressurized liquid. The initial Cauchy pressure is $p$.
We take the internal energy density of the liquid to be
\begin{equation}\label{eq.en-fl} W_{f\!\ell}(F):= \frac\lfl 2 (\det F-1)^2-p\det F, \quad\lfl >0.\end{equation}
In other words, the liquid is assumed to be an elastic fluid with bulk modulus $\lfl$.
This corresponds to a first Piola-Kirchhoff stress of the form
\begin{equation}\label{eq.PK}\ds P=\frac{\p W_{f\!\ell}}{\p F}(F)= (-p+\lfl (\det F -1))\cof F,\end{equation}
 hence to a Cauchy stress of the form $$\Sigma=(-p +\lfl (\det F-1))I.$$

Because of the presence of interfacial forces at the solid/liquid interface $\p B_a$, the interface $\p B_a$ exerts a normal force per unit surface area $-\gamma \kappa(y(x))\nu_{y(x)}$ on the liquid. In this last expression, $\gamma\geq 0$ stands for the surface tension of the solid/liquid interface, an intrinsic property of the interface at hand, while $\kappa(y(x)), x\in \p B_a$, is the mean curvature at the image of $x$ under the deformation $y: \O\to\R^3$, and $\nu_{y(x)}$ is the exterior normal to $y(B_a)$ at $y(x)$.

We assume that, in the absence of any external loading process, the initial liquid pressure $p$ is  so that the hydrostatic pressure $-p\vece$  equilibrates the surface tension without deformation of the ball so that, $y\equiv\id$, $\kappa\equiv -2/a$ and
\begin{equation}\label{eq.eqm}
 p=2\gamma/a.
\end{equation}

The contribution to the energy of the surface tension is $\gamma \H(\p y(B_a))$. { Indeed,  assuming that $y$ is a $C^2$-diffeomorphism on $\ol \O$ so that, in particular, $y(\O)$ is open, and $\p y(B_a)$ is a $C^2$-manifold, and considering a deformation  of the form $\Phi^\e(y)=y+\e z(y)$ with $z$ smooth and compactly supported in $y(\O)$, we get (see \cite[Theorems 7.31, 7.34]{AFP})}
\begin{equation}\label{eq.der-area}
 \frac{\p}{\p \e}\H(\p y(B_a))|_{\e=0}=\int_{\p y(B_a)} \dive_\tau z \,d\H =- \int_{\p y(B_a)}\kappa(y(x))\nu_{y(x)}\cdot z\, d\H.
\end{equation}
Now, since $y$ is one to one and $\p y(B_a)=y(\p B_a)$, the area formula   yields
\begin{equation}\label{eq.area}
 \H(\p y(B_a))=\int_{\p B_a} |\cof \nabla y\vece|\, d\H
 \end{equation}
and that contribution becomes
 \begin{equation}\label{eq.st-cont}
 \gamma \int_{\p B_a} |\cof \nabla y\vece| \,d\H.
 \end{equation}

Further, the fluid must be in equilibrium; thus $\dive P=0$ in $B_a$ with $P$ defined by \eqref{eq.PK}.
Explicit computations yield $$0=\dive P=\lfl (\cof \nabla y)^T\nabla(\det\nabla y).$$
Since $\det\nabla y>0$, the matrix $(\cof \nabla y)^T$ is invertible, hence $\det \nabla y \equiv {\rm cst}$ in $B_a$ and the constant must be $|y(B_a)|/|B_a|$. So, recalling \eqref{eq.en-fl},   the contribution of the fluid to the energy is
\begin{equation}\label{eq.fl-cont}
-p |y(B_a)|+ \frac{\lfl }2 |B_a| \left(\frac{|y(B_a)|}{|B_a|}-1\right)^2.
\end{equation}

In view of \eqref{eq.st-cont}, \eqref{eq.fl-cont}, we conclude that, for
 $y$ a $C^2$-diffeomorphism on $\ol \O$ and in the presence of a body load applied to the solid part of $\O$ and of density $\e f$ with $\e>0$ and $f:\Os\to\R^3$, the total energy is given by
\begin{equation}\label{eq.tot-en}
\E_\e (y):=\int_\Os W(\nabla y)\, dx + \J(y) -\e \int_\Os f\cdot y\, dx
\end{equation}
where
\begin{equation}\label{eq.tot-en2}
\J(y):=
\gamma \int_{\p B_a} |(\cof\nabla y)\vece|\,d\H+ \frac{\lfl}2 |B_a| \left(\frac{|y(B_a)|}{|B_a|}-1\right)^2-p |y(B_a)|.
\end{equation}

Note that we can always write
$$
|y(B_a)|=\int_{B_a}\det\nabla y(x)\, dx=\frac13\int_{\partial B_a} \cof\nabla y \vece\cdot y\, d\H.
$$

\begin{remark}\label{rem.bl}
From a mechanical standpoint the imposition of a (dead) body load acting only on the solid part of the domain $\Os$ might seem unrealistic. It would certainly be more appropriate to impose surface loads on $\p\O\setminus\overline\Dir$, where $\Dir$ is the Dirichlet part of the boundary. Of course surface loads are generally live loads and this would add an additional level of complexity which we do not want to address here. The interested reader is directed to \cite{MR} where a study of linearization in the presence of pressure loads is undertaken.
\hfill\P\end{remark}

\subsection{Existence of a minimizer in the absence of external loads}

As seen in the previous Subsection, relation \eqref{eq.eqm} ensures that the pressure in the fluid $B_a$ is balanced by the surface tension on $\p B_a$ in the initial configuration. Consequently, in the absence of external loadings, no loads are  applied to the solid part of $\O$, that is to $\O\setminus\ol B_a$.  Thus the identity mapping $\id$ should in  essence be a ``stationary point" for $\J$, this independently of the values of $\gamma$ or $\lfl$. Of course, such will only be true for smooth variations because of the determinant constraint.

In the context of \eqref{eq.tot-en}, we investigate if and when the identity mapping $\id$ is an energetic minimizer when $f\equiv 0$.

Formally, the isoperimetric inequality implies that
$$
\H(\p y(B_a))\ge (36\pi)^{1/3} |y(B_a)|^{2/3}=:C|y(B_a)|^{2/3}
$$
so that, in view of \eqref{eq.area},
$$
\J(y)\ge  \gamma C|y(B_a)|^{2/3}+\frac{\lfl }2 |B_a| \left(\frac{|y(B_a)|}{|B_a|}-1\right)^2-p |y(B_a)|.
$$

Set
$$
\Phi(t):= \gamma Ct^{2/3}+\frac{\lfl }2 |B_a| \left(\frac{t}{|B_a|}-1\right)^2-p t\quad \text{ for } t\ge 0,
$$ and note that
$$
\Phi(0)= \frac23 \pi \lfl  a^3>0, \quad \Phi'(0)=+\infty, \quad \lim_{t\to+\infty} \Phi(t)=\lim_{t\to+\infty} \Phi'(t)=+\infty,
$$
and $\Phi''(t)<0$ for $t<t^*$, $\Phi''(t)>0$ for $t>t^*$ with
$$
t^*= \left(\frac{9\lfl }{2\gamma C |B_a|}\right)^{-3/4}.
$$
Now, recalling \eqref{eq.eqm},  $\Phi'(t^*)=((2^{11}/3^3)\lfl )^{1/4}(\gamma /a)^{3/4}-(\lfl +2\gamma /a)$.
The maximum in $\gamma/a$ of the previous expression is attained at $\gamma/ a= (3/2)\lfl $ and it is $0$, so that $\Phi'(t^*)<0$  except when $\gamma=(3/2)\lfl a$ in which case $\Phi'(t^*)=0$.

In view of the already established properties of $\Phi$, this means that, for $\gamma\ne (3/2)\lfl a$, $\Phi$ has a unique maximizer at some point $t'<t^*$ and a unique minimizer at some point $t''>t^*$. Now, $\Phi'(|B_a|)=0$ and $\Phi''(|B_a|)=a^{-3}/\pi(3\lfl /4-\gamma a^{-1}/2)$, which is positive if and only if $\gamma< (3/2)\lfl a$. In that case, $|B_a|$ must be a minimizer and it is unique.

For $\gamma=(3/2)\lfl a$ we have $\Phi'(t)\ge 0$ for $t>0$ and thus $\Phi$ is increasing. Further $\Phi(|B_a|)>\Phi(0)$, so  $|B_a|$ is not a minimizer of $\Phi$.

Since the elastic energy $\int_{\O\setminus\ol B_a}W(\nabla y) dx$ is strictly positive for $y\ne Rx+c$ we conclude  that, provided that $\gamma< (3/2)\lfl a$, $\E_\e$ is only minimized at $y=\id$, and also possibly at $y=Rx+c$ for $R\in SO(3)$ and $c\in\R^3$, if we have imposed no boundary conditions on $y$.

\medskip
In conclusion we have obtained the following
\begin{lemma}\label{lem.id} In the absence of external loadings and under assumption \eqref{eq.eqm}, $\id$ is the unique minimizer of $\E_\e$ (possibly up to rotations and translations)
provided that
\begin{equation}\label{eq.cond-min}
\gamma< \frac32\lfl a \quad(\mbox{or equivalently } p< 3\lfl ).
\end{equation}
Further $\E_\e(\id)=\ds\frac43\pi\gamma a^2$.
\end{lemma}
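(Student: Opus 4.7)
The plan is to decouple the problem by bounding the interfacial/volumetric piece $\J(y)$ from below by a scalar function of the sole quantity $|y(B_a)|$ using the isoperimetric inequality, then to reduce the minimization of $\E_\e$ to a one-dimensional optimization, and finally to invoke rigidity of the bulk density $W$ in order to characterize the equality cases. For the first step, because $y$ is assumed to be a $C^2$-diffeomorphism of $\ol \O$, the set $y(B_a)$ is open, bounded and has a $C^2$-boundary satisfying $\p y(B_a)=y(\p B_a)$, so the sharp isoperimetric inequality $\H(\p y(B_a))\ge (36\pi)^{1/3}|y(B_a)|^{2/3}$ applies, with equality if and only if $y(B_a)$ is a ball. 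Combining this with \eqref{eq.area} and \eqref{eq.tot-en2}, and using $f\equiv 0$, yields
\[
\E_\e(y)\ge \int_\Os W(\nabla y)\,dx + \Phi(|y(B_a)|),
\]
where $\Phi$ is the scalar function introduced in the preceding discussion.

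The second step is the one-dimensional analysis of $\Phi$ on $[0,+\infty)$. Using the equilibrium relation \eqref{eq.eqm}, direct differentiation gives $\Phi'(|B_a|)=0$, and the computed formula for $\Phi''(|B_a|)$ shows that it is strictly positive precisely under hypothesis \eqref{eq.cond-min}. Combined with the concavity/convexity picture of $\Phi$ (concave on $(0,t^*)$, convex on $(t^*,+\infty)$, $\Phi(0)>0$, $\Phi'(0^+)=+\infty$, and $\Phi(t)\to+\infty$ at infinity), this identifies $|B_a|$ with the unique interior local minimum $t''$ of $\Phi$; the explicit computation $\Phi(|B_a|)=\tfrac43\pi\gamma a^2$ together with a comparison against the boundary value $\Phi(0)$ then confirms that $|B_a|$ is the unique global minimizer on $[0,+\infty)$.

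For the third and final step, assumption (iii) ensures $W\ge 0$ with $W(F)=0$ iff $F\in SO(3)$, so the bulk term in the lower bound is non-negative. Combining, one obtains $\E_\e(y)\ge \Phi(|B_a|)=\tfrac43\pi\gamma a^2=\E_\e(\id)$. Equality in this chain forces simultaneously $\int_\Os W(\nabla y)\,dx=0$, $|y(B_a)|=|B_a|$, and $y(B_a)$ a ball; the first condition, via the classical rigidity of $SO(3)$-valued gradient fields on the connected Lipschitz set $\Os$, forces $y=Rx+c$ for some $R\in SO(3)$ and $c\in\R^3$, which automatically verifies the other two. Without any Dirichlet datum on $\p\O$ any such rigid motion achieves the minimum, whereas a prescribed boundary trace selects $\id$ alone. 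The main obstacle is hidden in the second step: establishing strict local minimality of $|B_a|$ is a routine derivative computation, but the global comparison $\Phi(|B_a|)\le \Phi(0)$ must be argued carefully and is precisely where \eqref{eq.cond-min} intervenes.
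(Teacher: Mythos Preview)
Your approach is precisely the paper's: use the isoperimetric inequality to bound $\J(y)$ below by $\Phi(|y(B_a)|)$, analyze $\Phi$ on $[0,\infty)$, and conclude via rigidity for the bulk term. There is, however, a genuine gap in your second step. You assert that the comparison $\Phi(|B_a|)\le\Phi(0)$ ``is precisely where \eqref{eq.cond-min} intervenes'', but the explicit values are $\Phi(|B_a|)=\tfrac43\pi\gamma a^2$ and $\Phi(0)=\tfrac23\pi\lfl a^3$, so this inequality is equivalent to $2\gamma\le\lfl a$, i.e.\ $\gamma\le\tfrac12\lfl a$, a condition strictly stronger than \eqref{eq.cond-min}. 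In the range $\tfrac12\lfl a<\gamma<\tfrac32\lfl a$ one has $\Phi(|B_a|)>\Phi(0)$; since $\Phi$ increases continuously on $(0,t')$ from the limiting value $\Phi(0)$, there then exist $t\in(0,t')$ with $\Phi(t)<\Phi(|B_a|)$, and your lower bound $\E_\e(y)\ge\Phi(|y(B_a)|)$ alone does not force $\E_\e(y)\ge\Phi(|B_a|)$.

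The paper's own argument glosses over the same point: it declares $t''$ the ``unique minimizer'' of $\Phi$ from the critical-point count without comparing against the boundary behavior as $t\to0^+$. So you have faithfully reproduced its structure, and you correctly identified where the difficulty lies; but the comparison you invoke does not follow from \eqref{eq.cond-min}. Closing this in the full range would require keeping some coupling between the bulk term $\int_\Os W(\nabla y)\,dx$ and $|y(B_a)|$ (small enclosed volume forces $\nabla y$ away from $SO(3)$ near $\p B_a$), rather than dropping the bulk term outright.
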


\begin{remark}\label{rm.incomliqu}
Condition (\ref{eq.cond-min}) is always satisfied by incompressible liquids, i.e. when $\lfl =+\infty$.\hfill\P
\end{remark}

From now onward, we  restrict the setting to that for which {\em both} \eqref{eq.eqm} and \eqref{eq.cond-min} hold true.

\section{Linearization in the presence of a higher order regularization}\label{sec.lin}
As mentioned in the introduction, the linearization process cannot succeed without the addition of a regularizing term.

For simplicity we will assume henceforth that the domain $\O$ is clamped on a part $\Dir$ of its boundary, so that we can apply Poincar\'e's inequality.
We assume $\Dir$ to be a non-empty subset of $\p\O$, open in the relative topology of $\p\O$ and such that ${\rm cap}_2(\ol\Dir\setminus\Dir)=0$ (we refer to \cite[Section 4.7]{EG} for the notion of $2$-capacity).
This last condition is needed to ensure a suitable density result in the proof of the $\Gamma$-limsup inequality.

Let $p>3$ and
$$
W^{2,p}_\Dir(\O;\R^3):=\{u\in W^{2,p}(\O;\R^3): \ u=0\mbox{ on }\Dir\}.
$$
For a given load $f\in L^2(\Os;\R^3)$ we consider the regularized functional $\F_\e$ defined on $C^2_{loc}({\O};\R^3)\cap W^{2,p}_\Dir(\O;\R^3)$ as
\begin{equation}\label{eq.reg}
\F_\e(u):=\frac1{\e^2}\Big(\E_\e(\id+\e u)-\E_\e(\id)\Big) +\eta_\e\int_\Os |\nabla^2 u|^p\, dx,
\end{equation}
where
\begin{equation}\label{eta_e}
\e^{p/3}\le\eta_\e\stackrel{\e}{\to}0.
\end{equation}
We define
\begin{equation}\label{def.A}
\A:=\{u\in H^1(\Os;\R^3): \ u\cdot\vece\in H^1(\p B_a) \text{ and }u=0 \text{ on }\Dir\}\end{equation}
 and
the functional on $\A$
\begin{multline*}
\F(u):=\frac12 \int_\Os Q(\bE u)\, dx
+\frac\gamma2\int_{\p B_a}|\nabla_\tau (u\cdot\vece)|^2\, d\H
-\frac{\gamma}{a^2}\int_{\p B_a}  |u\cdot\vece|^2 \, d\H
\\
+\frac{\lfl}{2|B_a|} \left(\int_{\p B_a}  u\cdot\vece\,d\H\right)^2
-\int_\Os f\cdot u\, dx
\end{multline*}
with $\bE u:=1/2(\nabla u+\nabla u^T)$ and $Q$ defined as in Remark \ref{rm.positdef}.

Our first main result is the following compactness and convergence theorem.
\begin{theorem}\label{thm}
Assume that \eqref{eq.eqm} and \eqref{eq.cond-min} hold true.
Let $(u^\e)$ be a sequence in $C^2_{loc}({\O};\R^3)\cap W^{2,p}_\Dir(\O;\R^3)$ such that
\begin{equation}\label{min-seq}
\F_\e(u^\e)\leq C.
\end{equation}
Then there exists $u\in\A$ such that, up to subsequences,
$u^\e$ converge to $u$  weakly in $H^1(\Os;\R^3)$ and $u^\e\cdot\vece\wto u\cdot\vece$ weakly in $H^1(\p B_a)$.

Further, the $\Gamma$-limit of $\F_\e$ in the strong $L^2(\O\setminus\ol B_a;\R^3)$ topology is precisely $\F$.
\end{theorem}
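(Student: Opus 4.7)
The plan is to follow the linearization template of \cite{DMNP,MR,ADMLP}: Taylor-expand each summand of $\E_\e(\id+\e u^\e)-\E_\e(\id)$ to order $\e^2$, identify the first-order cancellations forced by the equilibrium condition \eqref{eq.eqm}, and then exploit weak lower semicontinuity of the surviving quadratic functional. The regularization $\eta_\e\int_\Os|\nabla^2 u^\e|^p\,dx$ plays a pivotal technical role: since $p>3$, the Sobolev embedding $W^{2,p}_\Dir(\O)\hookrightarrow W^{1,\infty}(\O)$ together with $\eta_\e\ge\e^{p/3}$ forces $\|\e\nabla u^\e\|_{L^\infty(\O)}=O(\e\,\eta_\e^{-1/p})=O(\e^{2/3})\to 0$. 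This in turn (i) legitimizes pointwise Taylor expansions of $W(I+\e\nabla u^\e)$ and of $|\cof(I+\e\nabla u^\e)\vece|$ with $o(\e^2)$ remainders, and (ii) endows $\nabla u^\e|_{\p B_a}$ with a pointwise meaning needed to make sense of the second-order surface contribution. Combined with assumption~(iii) on $W$ and the clamping on $\Dir$, this produces the bulk bound $\int_\Os|\bE u^\e|^2\,dx\le C$ via Korn--Poincar\'e, hence a uniform $H^1(\Os;\R^3)$-bound and, up to a subsequence, $u^\e\rightharpoonup u$ weakly in $H^1(\Os)$.

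For the refined compactness on $\p B_a$ I would unpack the surface contribution via \eqref{eq.alg4}, namely $\cof(I+\e\nabla y)\vece=\nabla y\,\tau_1\times\nabla y\,\tau_2$ in an orthonormal tangential frame $\{\tau_1,\tau_2,\vece\}$. The first-order coefficient reduces to $\gamma\int_{\p B_a}\dive_\tau u^\e\,d\H=(2\gamma/a)\int_{\p B_a}u^\e\cdot\vece\,d\H$ by the tangential divergence theorem on the sphere (where $\kappa\equiv-2/a$), and by \eqref{eq.eqm} this exactly cancels the $O(\e^{-1})$ contribution of $-p|y(B_a)|$. The second-order coefficient is an a priori complicated quadratic form in the trace of $\nabla u^\e$ on $\p B_a$ which, after tangential integration by parts via \eqref{eq.tau-nabla-div}, the Cayley--Hamilton identities \eqref{eq.alg2}--\eqref{eq.alg3}, and absorption of the pressure correction $-p\int_{B_a}\tr\cof\nabla u^\e\,dx$ (rewritten as a boundary term using \eqref{eq.alg1}), collapses to $\frac{\gamma}{2}\int_{\p B_a}|\nabla_\tau(u^\e\cdot\vece)|^2\,d\H-(\gamma/a^2)\int_{\p B_a}|u^\e\cdot\vece|^2\,d\H$. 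The negative $L^2(\p B_a)$-piece is controlled by trace from the $H^1(\Os)$-bound, so \eqref{min-seq} yields $\|\nabla_\tau(u^\e\cdot\vece)\|_{L^2(\p B_a)}\le C$, and hence $u^\e\cdot\vece\rightharpoonup u\cdot\vece$ in $H^1(\p B_a)$ with $u\in\A$.

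The $\Gamma$-liminf inequality then follows term-by-term on the identified expansion: the bulk passes by weak $L^2$-lower semicontinuity of $Q(\bE\,\cdot\,)$ (positive definite on $\msym$ by Remark~\ref{rm.positdef}) together with frame indifference; the fluid quadratic passes by continuity of the linear functional $v\mapsto\int_{\p B_a}v\cdot\vece\,d\H$ on weak $H^1(\Os)$; the positive surface term $v\mapsto\frac{\gamma}{2}\|\nabla_\tau(v\cdot\vece)\|_{L^2(\p B_a)}^2$ is lower semicontinuous on weak $H^1(\p B_a)$; the negative $-(\gamma/a^2)\int|v\cdot\vece|^2$ passes by strong $L^2(\p B_a)$-compactness of the trace; and the nonnegative regularization contribution is simply dropped. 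For the $\Gamma$-limsup I would first handle smooth $u\in C^2(\ol\O;\R^3)\cap W^{2,p}_\Dir$ with the trivial recovery $u^\e\equiv u$: uniform $W^{1,\infty}$-smallness of $\e u$ makes the Taylor expansion exact, and $\eta_\e\|\nabla^2 u\|_{L^p(\Os)}^p\to 0$ by the second half of \eqref{eta_e}. General $u\in\A$ are then reached by a density argument that crucially uses ${\rm cap}_2(\ol\Dir\setminus\Dir)=0$ to match the Dirichlet datum, combined with continuity of $\F$ on $\A$ and a diagonal selection.

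The principal obstacle is the second-order reduction just outlined: the raw $\e^2$-coefficient of $\gamma|\cof(I+\e\nabla u)\vece|$ mixes the normal component $u\cdot\vece$, the tangential component of $u$, and the full tangential gradient in nontrivial bilinear ways, and only the joint use of tangential integration by parts on $\p B_a$ (where the mean curvature is constant), the algebraic identities \eqref{eq.alg1}--\eqref{eq.alg4}, the precise equilibrium value $p=2\gamma/a$ from \eqref{eq.eqm}, and the $W^{1,\infty}$-smallness enforced by the regularization to absorb the remainders, produces the clean quadratic expression in $u\cdot\vece$ alone. Carrying out this reduction rigorously --- together with the density argument underlying the limsup construction --- is where the bulk of the technical work resides.
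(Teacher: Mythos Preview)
Your outline captures the correct structure, but the compactness step contains a circular dependency. You propose to obtain the $L^\infty$-smallness $\|\e\nabla u^\e\|_{L^\infty}=O(\e\,\eta_\e^{-1/p})$ first and then use it to Taylor-expand and extract the bulk bound. But the Sobolev embedding reads $\|\nabla u^\e\|_{L^\infty}\le C(\|\nabla u^\e\|_{L^2}+\|\nabla^2 u^\e\|_{L^p})$, so you need \emph{both} $\eta_\e\int|\nabla^2 u^\e|^p\le C$ \emph{and} $\|\nabla u^\e\|_{L^2}\le C$ before asserting any $L^\infty$-bound. From $\F_\e(u^\e)\le C$ alone you cannot isolate either of these: the surface contribution $\e^{-2}\big(\J(\id+\e u^\e)-\J(\id)\big)$ has no a priori sign, so you cannot separate the bulk and regularization terms from the force term $\int f\cdot u^\e$. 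The paper breaks the cycle by invoking Lemma~\ref{lem.id} first: under \eqref{eq.eqm} and \eqref{eq.cond-min} the isoperimetric inequality forces $\J(y)\ge\J(\id)$ for every admissible $y$, so the surface term is nonnegative a priori. This gives $\e^{-2}\int W(I+\e\nabla u^\e)\le C(\|u^\e\|_{L^2}+1)$, and then the Friesecke--James--M\"uller rigidity estimate---not linear Korn--Poincar\'e, which already presupposes the linearization you are trying to justify---combined with the $\Dir$-clamping yields $\|u^\e\|_{H^1}\le C$. Only after that does one extract $\eta_\e\int|\nabla^2 u^\e|^p\le C$, hence the $L^\infty$-smallness, and only then does the Taylor expansion of $\J$ become legitimate and produce the $H^1(\p B_a)$-bound on $u^\e\cdot\vece$.

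A second, smaller gap concerns the $\Gamma$-limsup: you need to approximate a general $u\in\A$ by smooth functions simultaneously in $H^1(\Os;\R^3)$ \emph{and} in $H^1(\p B_a)$ for the normal trace $u\cdot\vece$. Smooth approximation in $H^1(\Os)$ respecting the Dirichlet datum does not by itself control $\nabla_\tau(u_n\cdot\vece)$ on $\p B_a$; the paper handles this by solving an auxiliary harmonic Dirichlet problem in an annulus around $\p B_a$ to correct the normal component before diagonalizing, and this construction should be made explicit rather than subsumed under ``continuity of $\F$ on $\A$''.
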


\begin{remark}
Condition \eqref{min-seq} is clearly satisfied by any minimizing sequence $(u^\e)$ of~$\F_\e$ since $\F_\e(0)=0$.\hfill\P
\end{remark}

\begin{remark}\label{rm.posit}
By \eqref{PW-sphere} and \eqref{eq.cond-min} we have
$$
\frac\gamma2\int_{\p B_a}|\nabla_\tau (u\cdot\vece)|^2\, d\H
-\frac{\gamma}{a^2}\int_{\p B_a}  |u\cdot\vece|^2 \, d\H
+\frac{\lfl}{2|B_a|} \left(\int_{\p B_a}  u\cdot\vece\,d\H\right)^2\ge 0
$$
for any $u\in \A$. Furthermore, again by \eqref{PW-sphere} and \eqref{eq.cond-min}, for every $\delta>0$ small enough the following coercivity property holds:
\begin{multline*}
\frac\gamma2\int_{\p B_a}|\nabla_\tau (u\cdot\vece)|^2\, d\H
-\frac{\gamma}{a^2}\int_{\p B_a}  |u\cdot\vece|^2 \, d\H
+\frac{\lfl}{2|B_a|} \left(\int_{\p B_a}  u\cdot\vece\,d\H\right)^2 \\
\ge \delta \int_{\p B_a}|\nabla_\tau (u\cdot\vece)|^2\, d\H
- 2\frac{\delta}{a^2} \int_{\p B_a}  |u\cdot\vece|^2 \, d\H
\end{multline*}
for any $u\in \A$.
\hfill\P\end{remark}

\begin{proof}[Proof of Theorem~\ref{thm}]
We split the proof into three steps.\medskip

\noindent {\sf Step~1: Compactness.}
By \eqref{min-seq} we deduce that
\begin{multline}
\frac1{\e^2} \left(\int_\Os W(I+\e\nabla u^\e)\; dx+ \J(\id+\e u^\e)-\frac43\pi\gamma a^2\right) +\eta_\e\int_\Os |\nabla^2 u^\e|^p\, dx  \\
\leq  \int_\Os f\cdot u^\e\, dx +C
 \leq  \|f\|_{L^2(\Os)}\|u^\e\|_{L^2(\Os)}+C. \label{bound-I-v}
\end{multline}
Since $\J(\id+\e u^\e)\geq \frac43\pi\gamma a^2$ by Lemma \ref{lem.id}, for some possibly larger $C>0$ we have
$$
\int_\Os W(I+\e\nabla u^\e)\, dx\leq C\e^2(\|u^\e\|_{L^2(\Os)}+1).
$$
We now apply the rigidity estimate \cite[Theorem~3.1]{FJM} and conclude  that there exists a constant $R^\e\in SO(3)$ such that
\begin{equation}\label{rigidity}
\|I+\e\nabla u^\e-R^\e\|^2_{L^2(\Os)}\le C\e^2(\|u^\e\|_{L^2(\Os)}+1).
\end{equation}
Let $\xi^\e$ be the mean of the function $\id+\e u^\e-R^\e x$ on $\Os$. By Poincar\'e-Wirtinger's inequality we obtain
$$
\|\id+\e u^\e-R^\e x-\xi^\e\|_{H^1(\Os)}^2\leq C\e^2(\|u^\e\|_{L^2(\Os)}+1).
$$
Since $u^\e=0$ on $\Dir$, the continuity of the trace operator yields
$$
\|\id-R^\e x-\xi^\e\|_{L^2(\Dir)}^2\leq C\|\id+\e u^\e-R^\e x-\xi^\e\|_{H^1(\Os)}^2\leq C\e^2(\|u^\e\|_{L^2(\Os)}+1).
$$
By \cite[Lemma~3.3]{DMNP} (see also the proof of \cite[Proposition~3.4]{DMNP}) this implies that
$$
|I-R^\e|\leq C\e^2(\|u^\e\|_{L^2(\Os)}+1).
$$
Consequently, by \eqref{rigidity} and the boundary condition on $\Dir$, we conclude that
\begin{equation}\label{eq.H1bound}
(u^\e) \text{ is bounded in } H^1(\Os;\R^3).
\end{equation}

Therefore, by \eqref{bound-I-v} we obtain
\begin{equation}\label{bound-J-v}
\J(\id+\e u^\e)-\frac43\pi\gamma a^2 \leq  C\e^2
\end{equation}
and
$$
\eta_\e\int_\Os |\nabla^2 u^\e|^p\, dx  \leq  C.
$$
By Sobolev embedding we deduce that
$$
\|\nabla u^\e\|_{C^0(\overline\Omega\setminus B_a)} \leq C\big( \|\nabla u^\e\|_{L^2}+ \|\nabla^2 u^\e\|_{L^p}\big) \leq  C + C\left(\frac{1}{\eta_\e}\right)^{\frac1p}.
$$
Thus, by \eqref{eta_e},
\begin{equation}\label{bound-ho-v}
\e\|\nabla u^\e\|_{C^0(\overline\Omega\setminus B_a)} \leq
C\left(\frac{\e^{p}}{\eta_\e}\right)^{\frac 1p}=:C\sigma_\e \to0.
\end{equation}

In all that follows the $O(\sigma_\e^3)$ terms should be understood as quantities whose norm in $C^0(\overline\Omega\setminus B_a)$ is of order $\sigma_\e^k$ with $k\ge 3$. In other words, because of  estimate \eqref{bound-ho-v}, we can neglect in the $\Gamma$-convergence process all terms that are more than quadratic in~$\e \nabla u^\e$.

Using \eqref{eq.alg1} we have that
\begin{equation}\label{eq.det-MG}
\det (I+\e\nabla u^\e)= 1+\e\dive u^\e+\e^2\tr(\cof\nabla u^\e)+\e^3\det\nabla u^\e,
\end{equation}
while
\begin{equation}\label{eq.cof-MG}
\cof (I+\e\nabla u^\e)= I+ \e\big((\dive u^\e) I-(\nabla u^\e)^T\big)+
\e^2\cof \nabla u^\e.
\end{equation}
From \eqref{eq.alg2} and \eqref{eq.alg3} we can further write that
\begin{multline}\label{eq.cof-MGbis}
\cof (I+\e\nabla u^\e)  =  I+ \e\big((\dive u^\e) I-(\nabla u^\e)^T\big) \\
+
\e^2 \big(\tr(\cof\nabla u^\e)I-\dive u^\e (\nabla u^\e)^T +(\nabla u^\e)^T(\nabla u^\e)^T\big).
\end{multline}

By \eqref{eq.cof-MG} we obtain
\begin{eqnarray}
|\cof (I+\e\nabla u^\e) \vece|^2 & = & 1+2\e\big(\dive u^\e -(\nabla u^\e)^T\vece\cdot\vece\big) \nonumber
\\
&&
{}+ \e^2\big( |(\dive u^\e)\vece-(\nabla u^\e)^T\vece|^2+
2\cof\nabla u_\e\vece\cdot\vece\big) \nonumber
\\ &&
{} +O(\sigma_\e^3). \label{fist}
\end{eqnarray}

The $\e$-term in \eqref{fist} reads as $2\e \dive_\tau u^\e$, while for the $\e^2$-term we use that
\begin{equation}\label{eq.form-tau}
(\dive u^\e)\vece-(\nabla u^\e)^T\vece= (\dive_\tau u^\e)\vece-(\nabla_\tau u^\e)^T\vece
\end{equation}
so that
$$
|(\dive u^\e)\vece-(\nabla u^\e)^T\vece|^2= |(\dive_\tau u^\e)\vece-(\nabla_\tau u^\e)^T\vece|^2
=(\dive_\tau u^\e)^2+ |(\nabla_\tau u^\e)^T\vece|^2.
$$
 Finally, from \eqref{eq.alg4}, we deduce that $\cof\nabla u_\e\vece=\cof\nabla_\tau u_\e\vece$.
Thus,
\begin{eqnarray*}
|\cof (I+\nabla u^\e) \vece|^2 & = & 1+2\e \dive_\tau u^\e
\\
& & + \e^2\big( |(\nabla_\tau u^\e)^T\vece|^2 +(\dive_\tau u^\e)^2
+2\cof\nabla_\tau u_\e\vece\cdot\vece\big) +O(\sigma_\e^3).
\end{eqnarray*}
Using the expansion $\sqrt{1+x}=1+\frac12 x-\frac18 x^2+O(x^3)$, we conclude that
\begin{equation}\label{eq.|cof|}|\cof \nabla y^\e \vece|=1+\e \dive_\tau u^\e
+ \frac{\e^2}2\big( |(\nabla_\tau u^\e)^T\vece|^2
+2\cof\nabla_\tau u_\e\vece\cdot\vece\big)
+O(\sigma_\e^3).
\end{equation}
From \eqref{eq.alg2} and \eqref{eq.alg3}, we have that
\begin{multline*}
2\cof\nabla_\tau u_\e\vece\cdot\vece=2\tr(\cof \nabla_\tau u^\e) -(\dive_\tau u^\e)(\nabla_\tau u^\e)^T\vece\cdot\vece\\+(\nabla_\tau u^\e)^T(\nabla_\tau u^\e)^T\vece\cdot\vece.
\end{multline*}
But, since $\nabla_\tau u^\e\vece=0$,
$$
-(\dive_\tau u^\e)(\nabla_\tau u^\e)^T\vece\cdot\vece+(\nabla_\tau u^\e)^T(\nabla_\tau u^\e)^T\vece\cdot\vece=0,
$$
so that, using \eqref{eq.alg3} once again,
 \begin{equation*}\label{eq.cofuerer}
 2\cof\nabla_\tau u_\e\vece\cdot\vece=2\tr(\cof \nabla_\tau u^\e)= (\dive_\tau u^\e)^2-(\nabla_\tau u^\e)^T\cdot\nabla_\tau u^\e.
 \end{equation*}

 Hence \eqref{eq.|cof|} finally reads as
 \begin{equation}\label{eq.|cof|bis}
 |\cof \nabla y^\e \vece|=1+\e \dive_\tau u^\e+\frac{\e^2}2\big( |(\nabla_\tau u^\e)^T\vece|^2
+(\dive_\tau u^\e)^2-(\nabla_\tau u^\e)^T\cdot\nabla_\tau u^\e\big)+O(\sigma_\e^3).
 \end{equation}

With the help of \eqref{eq.det-MG} and \eqref{eq.|cof|bis} we get from  \eqref{eq.tot-en2}
\begin{multline}\label{eq.lin-en}
\J(\id+\e u^\e)  =  \gamma \H(\p B_a)-p|B_a|+\e\int_{\p B_a}\big(\gamma\dive_\tau u^\e  -p u^\e\cdot\vece\big)\, d\H
\\ +\e^2
\left( \int_{\p B_a} \frac\gamma2\left(  |(\nabla_\tau u^\e)^T\vece|^2
+(\dive_\tau u^\e)^2-(\nabla_\tau u^\e)^T\cdot\nabla_\tau u^\e \right) \, d\H   \right.
\\
+\frac{\lfl}{2|B_a|} \left(\int_{\p B_a}  u^\e\cdot\vece\, d\H\right)^2
\left. {} -p\int_B\tr(\cof\nabla u^\e)\, dx\right)+O(\sigma_\e^3).
\end{multline}
In view of \eqref{eq.eqm}, the constant  term in \eqref{eq.lin-en} is $4/3\pi\gamma  a^2$ while the linear term disappears upon invoking  the second equality in \eqref{eq.der-area} for the sphere $\p B_a$.

Note also that, with the help of \eqref{eq.alg3} once again,
$$
\tr(\cof\nabla u^\e)=\frac12\big((\dive u^\e)^2- \tr [(\nabla u^\e)^2]\big)=\frac12\big(\dive(\dive u^\e u^\e)-\dive(\nabla u^\e u^\e)\big),
$$
so the last term in \eqref{eq.lin-en} can be written as the boundary term
$$
-\frac{p}2\int_{\partial B_a} u^\e \cdot(\dive u^\e\vece- (\nabla u^\e)^T \vece)\ d\H
$$
or still, in view of \eqref{eq.form-tau}, as
$$
 -\frac{p}2 \int_{\partial B_a}\big(\dive_\tau u^\e u^\e \cdot \vece-  u^\e\cdot (\nabla_\tau u^\e)^T\vece\big)\, d\H.
$$
Summing up, \eqref{eq.lin-en} also reads as
\begin{eqnarray}
\lefteqn{\J(\id+\e u^\e) -\J(\id)}
\nonumber \\
& = & \e^2
\left( \int_{\p B_a} \frac\gamma2\left(  |(\nabla_\tau u^\e)^T\vece|^2
+(\dive_\tau u^\e)^2-(\nabla_\tau u^\e)^T\cdot\nabla_\tau u^\e \right) \, d\H  \right.
\nonumber \\
& & {} +\frac{\lfl}{2|B_a|} \left(\int_{\p B_a}  u^\e\cdot\vece\, d\H\right)^2
\nonumber \\
& & \left. {} -\frac{p}2 \int_{\partial B_a}\big(\dive_\tau u^\e u^\e \cdot \vece-  u^\e\cdot (\nabla_\tau u^\e)^T\vece\big)\, d\H\right)+O(\sigma_\e^3).
\label{eq.lin-en-bis}
\end{eqnarray}
Appealing to \eqref{eq.tau-nabla-div},
 we obtain, after some algebraic manipulations,
\begin{multline*}
(\dive_\tau u^\e)^2-(\nabla_\tau u^\e)^T:\nabla_\tau u^\e=
 \dive_\tau(\dive_\tau u^\e u^\e - \nabla_\tau u^\e u^\e)
\\
{}+(\dive_\tau(\nabla_\tau u^\e)^T\cdot\vece)(u^\e\cdot\vece)
+(\nabla_\tau u^\e)^T\vece\cdot (\nabla_\tau\vece)^Tu^\e.
\end{multline*}
Therefore, using the second equality in \eqref{eq.der-area} and \eqref{eq.eqm},  \eqref{eq.lin-en-bis} reduces to
\begin{multline}\label{e2-red}
\J(\id+\e u^\e)-\J(\id)=\\
\e^2\left(\int_{\p B_a} \frac\gamma2\left(  |(\nabla_\tau u^\e)^T\vece|^2
+(\dive_\tau(\nabla_\tau u^\e)^T\cdot\vece)(u^\e\cdot\vece) +(\nabla_\tau u^\e)^T\vece\cdot (\nabla_\tau\vece)^Tu^\e
\right)  d\H\right. \\
{}+\frac{\lfl}{2|B_a|} \left(\int_{\p B_a}  u^\e\cdot\vece\, d\H\right)^2\left.\vphantom{\int_{\p B_a}}\right)+O(\sigma_\e^3).
\end{multline}
We now write $u^\e=v^\e+\varphi^\e\vece$, where $v^\e\cdot\vece=0$ and $\varphi^\e=u^\e\cdot\vece$.
By differentiating we have
$$
(\nabla_\tau v^\e)^T\vece= - (\nabla_\tau \vece)^Tv^\e= -\frac1a v^\e
$$
since $\nabla_\tau \vece=1/a(I-\vece\otimes\vece)$. Thus
\begin{equation}\label{eq.trans1}
(\nabla_\tau u^\e)^T\vece = \nabla_\tau \varphi^\e- (\nabla_\tau \vece)^Tv^\e =
\nabla_\tau \varphi^\e-\frac1a v^\e.
\end{equation}
Therefore,
\begin{eqnarray}
(\nabla_\tau u^\e)^T\vece\cdot (\nabla_\tau\vece)^Tu^\e & = & (\nabla_\tau u^\e)^T\vece\cdot (\nabla_\tau\vece)^Tv^\e
\nonumber
\\
& = & \label{eq.trans3}
(\nabla_\tau u^\e)^T\vece\cdot(\frac1a v^\e)=\frac1a \nabla_\tau \varphi^\e\cdot v^\e-\frac1{a^2} |v^\e|^2,
\end{eqnarray}
while, using that $\nabla_\tau u^\e\vece=0$ and that $\nabla_\tau \varphi^\e\cdot\vece=0$,
\begin{eqnarray}
\dive_\tau(\nabla_\tau u^\e)^T\cdot\vece & = & \dive_\tau(\nabla_\tau u^\e\vece)-(\nabla_\tau u^\e)^T:\nabla_\tau\vece
\nonumber
\\
& = & -(\nabla_\tau u^\e)^T\cdot\nabla_\tau\vece  \nonumber
\\
& = & -\frac 1 a\dive_\tau u^\e = -\frac1a \dive_\tau v^\e-\frac2{a^2}\varphi^\e. \label{eq.trans2}
\end{eqnarray}
Combining \eqref{eq.trans1}--\eqref{eq.trans2} together, expression \eqref{e2-red} can again be rewritten as
\begin{multline}\label{eq.Jfinal}
\J(\id+\e u^\e)-\J(\id)= \\
\e^2\left(\int_{\p B_a} \frac\gamma2\left(  \big|\nabla_\tau \varphi^\e-\frac1a v^\e\big|^2
-\frac1a \varphi^\e\dive_\tau v^\e-\frac2{a^2}|\varphi^\e|^2
+\frac1a \nabla_\tau \varphi^\e\cdot v^\e-\frac1{a^2} |v^\e|^2
\right)  d\H\right. \\
+\frac{\lfl}{2|B_a|} \left(\int_{\p B_a}  \varphi^\e\, d\H\right)^2 \left. \vphantom{\int_{\p B_a}}\right)+O(\sigma_\e^3).
\end{multline}
Integrating by parts the second term in the first integral above yields, in view of the second equality in \eqref{eq.der-area},
\begin{eqnarray*}
\int_{\p B_a} \varphi^\e\dive_\tau v^\e\, d\H  & = &
- \int_{\p B_a} \nabla_\tau\varphi^\e\cdot v^\e\, d\H +
\frac2a \int_{\p B_a} \varphi^\e v^\e\cdot\vece \, d\H
\\
& = & - \int_{\p B_a} \nabla_\tau\varphi^\e\cdot v^\e\, d\H.
\end{eqnarray*}
We finally conclude that expression \eqref{eq.Jfinal} is given by
\begin{multline}\label{eq.JFinal}
 \J(\id+\e u^\e)-\J(\id)= \J(\id+\e u^\e)- \frac4{3}\pi\gamma  a^2 \\
 =\e^2\left(\frac\gamma2\int_{\p B_a}|\nabla_\tau \varphi^\e|^2\, d\H
-\frac{\gamma}{a^2}\int_{\p B_a}  |\varphi^\e|^2 \, d\H +\frac{\lfl}{2|B_a|} \left(\int_{\p B_a}  \varphi^\e\,d\H\right)^2\right)+O(\sigma_\e^3),
\end{multline}
where we recall that $\varphi^\e=u^\e\cdot \vece$.

By \eqref{bound-J-v}, \eqref{eta_e} and the definition \eqref{bound-ho-v} of $\sigma_\e$, we deduce that
\begin{eqnarray*}
\frac\gamma2\int_{\p B_a}|\nabla_\tau (u^\e\cdot\vece)|^2\, d\H
& \leq &
\frac{\gamma}{a^2}\int_{\p B_a}  |u^\e\cdot\vece|^2 \, d\H
-\frac{\lfl}{2|B_a|} \left(\int_{\p B_a}  u^\e\cdot\vece\,d\H\right)^2
\\
&&
{}+ C+ \frac1{\e^2}O(\sigma_\e^3)  \\
& \leq &C+ C\|u^\e\|_{L^2(\p B_a)}^2+C \left(\frac{\e^{p/3}}{\eta_\e}\right)^{\frac 3p}\le C\|u^\e\|_{L^2(\p B_a)}^2+C .
\end{eqnarray*}
Since $(u_\e)$ is bounded in $H^1(\Os;\R^3)$, its trace on $\p B_a$ is bounded in $L^2(\p B_a;\R^3)$. Therefore, the inequality above implies that
\begin{equation}\label{eq.boundtang}(\nabla_\tau (u^\e\cdot\vece)) \mbox{ is bounded in }L^2(\p B_a;\R^3).\end{equation}

At the expense of  extracting a subsequence,  $u_\e\wto u$ weakly in $H^1(\Os;\R^3)$, $u_\e\to u$ strongly in $L^2(\p B_a;\R^3)$, and
$\nabla_\tau (u^\e\cdot\vece)\wto \nabla_\tau (u\cdot\vece)$ weakly in $L^2(\p B_a;\R^3)$, for some $u\in \A$, which completes the first part of the proof of Theorem~\ref{thm}.\medskip

\noindent {\sf Step~2: $\Gamma$-liminf.}
Assume that $u^\e\to u$ strongly in $L^2(\O;\R^3)$ and, also, without loss of generality that $\F_\e(u^\e)\le C$ and $\liminf \F_\e(u^\e)=\lim \F_\e(u^\e)$. The first step of this proof guarantees that
\eqref{eq.H1bound}, \eqref{bound-ho-v}, \eqref{eq.JFinal}, and \eqref{eq.boundtang} hold. Therefore, by
\eqref{bound-ho-v} and lower semicontinuity, we have that, up to a subsequence,
\begin{multline}\label{liminf1}
\liminf_{\e\to0}\frac1{\e^2}\Big(
\J(\id+\e u^\e) -\frac4{3}\pi\gamma  a^2
\Big) \\
\geq
\frac\gamma2\int_{\p B_a}|\nabla_\tau (u\cdot\vece)|^2\, d\H
-\frac{\gamma}{a^2}\int_{\p B_a}  (u\cdot\vece)^2 \, d\H
+\frac{\lfl}{2|B_a|} \left(\int_{\p B_a}  u\cdot\vece\,d\H\right)^2.
\end{multline}
Moreover,
\begin{equation}\label{liminf2}
\liminf_{\e\to0}
\left(\eta_\e\int_\Os |\nabla^2 u^\e|^p\, dx -\int_\Os f\cdot u^\e\, dx
\right)
\geq
-\int_\Os f\cdot u\, dx.
\end{equation}
Finally, as in \cite{DMNP},
$$
\liminf_{\e\to0}\frac1{\e^2}\int_\Os W(I+\e\nabla u^\e)\, dx
\geq
\frac12 \int_\Os Q(\bE u)\, dx.
$$
Note however that, in view of \eqref{bound-ho-v} and since the quadratic form $Q$ is positive definite  (see Remark~\ref{rm.positdef}), the above liminf is trivial in our setting.
Together with \eqref{liminf1} and \eqref{liminf2}, this proves that
$$
\lim_{\e\to0}\F_\e(u^\e)\ge \F(u).
$$
\medskip

\noindent {\sf Step~3:  $\Gamma$-limsup.}
We have to show that for every $u\in \A$ (see \eqref{def.A} for the definition of $\A$) there exists a sequence $(u^\e)\subset  C^2_{loc}(\O;\R^3)\cap W^{2,p}_\Dir(\O;\R^3)$
such that $u^\e$ converge to $u$ strongly in $H^1(\Os;\R^3)$, $u^\e\cdot\vece\to u\cdot\vece$ strongly in $H^1(\p B_a)$,
and
\begin{equation}\label{liminf}
\lim_{\e\to0}\F_\e(u^\e) = \F(u).
\end{equation}

If $u\in C^2_{loc}(\Omega\setminus B_a;\R^3)\cap W^{2,p}_\Dir(\O;\R^3)$, the constant sequence
$u^\e:=\tilde u$,
where $\tilde u$ is any $C^2_{loc}$ extension of $u$ to $\O$,
has all the desired properties. Indeed, since $p>3$, by Sobolev embedding $\nabla u$ is uniformly bounded in $\ol\Omega\setminus B_a$, so that by Taylor expansion
$$
\lim_{\e\to0}\frac1{\e^2}\int_\Os W(I+\e\nabla u)\, dx
=
\frac12 \int_\Os Q(\nabla u)\, dx
$$
and moreover \eqref{eq.JFinal} holds with $O(\e^3)$ in place of $O(\sigma_\e^3)$. Therefore,
\begin{multline*}
\lim_{\e\to0}\frac1{\e^2}\Big(
\J(\id+\e u^\e) -\frac4{3}\pi\gamma  a^2
\Big) \\
=
\frac\gamma2\int_{\p B_a}|\nabla_\tau (u\cdot\vece)|^2\, d\H
-\frac{\gamma}{a^2}\int_{\p B_a}  |u\cdot\vece|^2 \, d\H
+\frac{\lfl}{2|B_a|} \left(\int_{\p B_a}  u\cdot\vece\,d\H\right)^2.
\end{multline*}
Finally, since $\eta_\e\to 0$ by \eqref{eta_e},
$$
\eta_\e\int_\Os |\nabla^2 u|^p\, dx -\int_\Os f\cdot u\, dx
\to
 -\int_\Os f\cdot u\, dx.
$$

Let now $u\in \A$. By \cite[Lemmas~A.1 and~A.2]{ADMLP} and the assumptions on $\Dir$ there exists a sequence $(v_n)\subset C^\infty(\overline\O\setminus B_a;\R^3)$ such that $v_n=0$ on $\Gamma$ and $v_n$ converge to $u$ strongly in $H^1(\Os;\R^3)$. Let $\eta>0$ be such that $\ol B_{a+\eta}\subset\O$.
Consider a sequence $(\psi_n) \subset C^\infty(\p B_a)$ that approximates $u\cdot\vece$ strongly in $H^1(\p B_a)$
(see e.g. \cite{H}).

Let $U:=B_{a+\eta}\setminus\ol B_a$. Consider the solution $w_n$ of the following system:
$$\begin{cases}
-\Delta w_n = -\Delta (v_n\cdot\vece) \quad \mbox{ in } U,\\[2mm]
w_n= \psi_n  \mbox{ on }\p B_a,\quad w_n= v_n\cdot\vece \mbox{ on }\p B_{a+\eta}.
\end{cases}$$
By elliptic regularity we have that $w_n\in C^\infty(\ol U)$. Moreover,
$$
w_n\to w \quad \mbox{ strongly in } H^1(U),
$$
where $w$ is the solution to
$$\begin{cases}
-\Delta w= -\Delta (u\cdot\vece) \quad \mbox{ in } U,\\[2mm]
w= u\cdot\vece \quad\mbox{ on }\p U,
\end{cases}
$$
so that $w=u\cdot\vece$. Further, by construction $w_n$ converges strongly to $u\cdot\vece$ in $H^1(\p B_a)$.

Let $\varphi\in C^\infty_c(B_{a+\eta})$ be a cut-off function such that $\varphi=1$ on $B_{a+\eta/2}$. Define
$$
u_n:=\varphi (w_n- v_n\cdot\vece) \vece+  v_n.
$$
Then $u_n\in C^\infty(\ol\O\setminus B_a;\R^3)$ for every $n$, $u_n\to u$ strongly in $H^1(\Os;\R^3)$, and
$u_n\cdot\vece \to u\cdot\vece$ strongly in $H^1(\p B_a)$.
Since we clearly have that $\F(u_n)\to \F(u)$, the result is achieved through a diagonalization process.
\end{proof}

\begin{remark}[The linearized system]\label{rem.PDE}
The $\Gamma$-convergence and compactness result of Theorem~\ref{thm} immediately implies the existence of a minimizer $u$ for $\F$ on $\A$. Simple variations and use of the second equality in \eqref{eq.der-area} then demonstrate that $u$ satisfies the following set of equations:
\begin{equation}\label{eq.PDEs}
\begin{cases}
-\dive (\Aa\bE u)=f \quad \mbox{ in }\Os, \\[2mm]
\ds\gamma\Delta_\tau(u\cdot\vece)+2\frac\gamma {a^2} u\cdot\vece+\big((\Aa \bE u) \vece\big)_r-\frac{3\lfl }{4\pi a^3} \int_{\p B_a} u\cdot\vece\; d\H=0 \quad \mbox{ on }\p B_a,\\[3mm]
(\Aa \bE u)\vece\parallel \vece \quad \mbox{ on }\p B_a,\\[2mm]
u=0 \mbox{ on }\Dir,\quad (\Aa \bE u)\nu_{\p\O} =0 \mbox{ on }\p \O\setminus \ol\Dir,
\end{cases}
\end{equation}
where, again, $\bE u=1/2(\nabla u+\nabla u^T)$ and $\Delta_\tau$ is the Laplace-Beltrami operator, defined by $\Delta_\tau\varphi=\dive_\tau(\nabla_\tau\varphi)$, on $\p B_a$. Note that $(\Aa\bE u)\vece$ is an element of $H^{-1/2}(\p B_a;\R^3)$, so the notation $(\Aa \bE u)\vece\parallel \vece$ means that it only acts on the radial component of elements of $H^{1/2}(\p B_a;\R^3)$, while $\big((\Aa \bE u) \vece\big)_r$ is defined through the following equality:
 \begin{equation}\label{def.dual}
 \langle\big((\Aa \bE u) \vece\big)_r,v\cdot \vece\rangle:=\langle (\Aa \bE u) \vece,v\rangle_{H^{-1/2}(\p B_a)\times H^{1/2}(\p B_a)}
 \end{equation}
for every $v\in H^{1/2}(\p B_a;\R^3)$.

Note that \eqref{eq.PDEs} has a unique solution since the associated quadratic form, that~is,
\begin{multline*}
\frac12 \int_\Os Q(\bE u)\, dx
+\frac\gamma2\int_{\p B_a}|\nabla_\tau (u\cdot\vece)|^2\, d\H
-\frac{\gamma}{a^2}\int_{\p B_a}  |u\cdot\vece|^2 \, d\H
\\
+\frac{\lfl }{2|B_a|} \left(\int_{\p B_a}  u\cdot\vece\,d\H\right)^2
\end{multline*}
is coercive on $\A$ in view of the positive definiteness of
$Q$ (see Remark \ref{rm.positdef}) and of Remark \ref{rm.posit}. So, uniqueness and existence in $\A$ -- which we have already secured, thanks to the $\Gamma$-convergence process -- can be obtained directly through Lax-Milgram lemma.

This is, to our knowledge, the first time that a linearization process produces an interfacial PDE, this at the expense of introducing a vanishing regularization. The set of PDE's (\ref{eq.PDEs}) is precisely that derived formally in \cite{GLP} when specialized to solid/liquid interfaces with constant surface tension $\gamma$.\hfill\P
\end{remark}

\begin{remark}\label{rem.AEur}
 Testing \eqref{eq.PDEs} by $\vece$, we obtain with the help of \eqref{eq.der-area} that
 $$
 \langle \big((\Aa \bE u) \vece\big)_r,1\rangle= \left(3\frac{\lfl}{a}-2\frac\gamma{a^2}\right)\int_{\p B_a} u\cdot\vece\; d\H.
 $$
\vskip-.9cm \hfill\P
\end{remark}

\begin{remark}\label{rem.manyinclusions}
The above result equally applies to domains containing any finite number of liquid inclusions.\hfill\P
\end{remark}

\section{The linearized problem in the presence of many inclusions}\label{sec.hom}

\subsection{Homogenization.}
In this subsection, we propose to investigate the limit (ma\-croscopic) behavior of a linearized solid filled with many periodically distributed liquid inclusions pressurized at the same pressure. Note that the periodicity assumption is not essential; we adopt it below for brevity sake.

The setting is as follows. The domain $\O$ of the previous sections is under the same loading $f$ (defined as an element of  $L^2(\O;\R^3)$ this time) and boundary conditions as before (see the beginning of Section \ref{sec.lin}).

We cover $\O$ with identical disjoint cubes $Y^i_\e:=\e i+\e Y$ for $i\in\Z^3$, $Y:=[-1/2,1/2)^3$, each containing an identical centered spherical inclusion $\Bie:=\e i+\e  B_a$ with $a<1/2$, filled with a liquid pressurized at the pressure $\e p$.
Let $\Ie$ denote the set of centers $i\in\Z^3$ such that $Y^i_\e\subset\Omega$ and ${\rm dist}(Y^i_\e,\partial\Omega)\geq\e$. Note that $\#(I_\e)\simeq1/\e^3 $.
We define the following  domains
$$
\Oe:=\O\setminus\left(\cup_{i\in\Ie} \ol B_{\e a}^i\right), \quad\oe:=\cup_{i\in\Ie}(Y^i_\e\setminus \ol B^i_{\e a}), \quad\K:=\cup_{i\in\Ie}Y^i_\e.$$
As an immediate corollary of the results  in Remark \ref{rem.PDE}, the solution $\ue$ to the system
\begin{equation}\label{eq.eps-sys}
\begin{cases}
-\dive (\Aa\bE \ue)=f \quad \mbox{ in }\Oe,\\[2mm]
\ds\e\gamma\Delta_\tau(\ue\cdot\vece)+2\frac{\gamma} {\e a^2} \ue\cdot\vece+((\Aa \bE \ue)\vece)_r
\\\qquad\qquad\qquad\qquad\displaystyle
-\frac{3\lfl }{4\pi \e^3a^3} \int_{\p \Bie} u\cdot\vece\; d\H=0 \quad\mbox{ on }\p \Bie, \ i\in\Ie, \\[3mm]
(\Aa \bE \ue)\vece\parallel \vece \quad \mbox{ on }\p \Bie, \ i\in \Ie, \\[2mm]
u=0 \mbox{ on }\Dir,\quad (\Aa \bE \ue)\nu_{\p\O} =0 \mbox{ on }\p \O\setminus \ol\Dir
\end{cases}
\end{equation}
exists and is unique in the class
\begin{equation}\label{def.Ae}
\A^\e:=\left\{u\in H^1(\Oe;\R^3): \ u\cdot\vece\in \cup_{i\in\Ie} H^1(\p \Bie) \text{ and }u=0 \text{ on } \Dir\right\}.
\end{equation}

Note that the various powers of $\e$ in \eqref{eq.eps-sys} correspond to the rescaling of both $p$ and $\gamma$ by $\e$, the natural scaling if one wishes to conserve both \eqref{eq.eqm} and \eqref{eq.cond-min}.

The solution $\ue$ is then the (unique) minimizer in $\A^\e$ of the functional
\begin{equation}\label{eq.def-fe}
\F^\e(v):=\frac12 \int_{\Oe} Q(\bE v)\, dx
+\sum_{i\in\Ie } \mathcal V^\e_i(v) -\int_{\Oe}f\cdot v\, dx,
\end{equation}
where
\begin{multline}\label{Vei-var-0}
\mathcal V^\e_i(v):=\frac{\gamma\e}2\int_{\p \Bie}|\nabla_\tau (v\cdot\vece)|^2\, d\H
{} -\frac{\gamma}{\e a^2}\int_{\p \Bie}  (v\cdot\vece)^2 \, d\H
\\
{}+\frac{\lfl }{2\e^3|B_a|} \left(\int_{\p \Bie}  v\cdot\vece\,d\H\right)^2
\end{multline}
for $i\in\Ie$. Note that by \eqref{identity} with $K={\lfl a^2/ (2\gamma |B_a|)}$ we can rewrite
\begin{multline}\label{Vei-var}
\mathcal V^\e_i(v)=\frac{\gamma\e}2\int_{\p\Bie}|\nabla_\tau (P^2_{i,\e a}(v\cdot \vece  ))|^2\,d{\mathcal  H}^2
- \frac{\gamma}{\ep a^2}\int_{\p\Bie}|P^2_{i,\e a}(v\cdot \vece)  |^2\,d{\mathcal  H}^2
\\
{} + \frac1{4\pi\e^3 a^3}\Big(\frac{3\lfl}2-\frac\gamma a\Big)\left(\int_{\p\Bie}v\cdot \vece  \,d{\mathcal  H}^2\right)^2,
\end{multline}
where  $P^2_{i,\e a}$ is the orthogonal projection in $L^2(\p \Bie)$ onto the orthogonal space to affine functions, see Appendix~A.

From the minimality of $\ue$ we have
\begin{eqnarray}
0=\F^\e(0) &\ge & \F^\e(\ue)  \nonumber
\\
& \ge &
\frac12 \int_{\Oe} Q(\bE \ue)\, dx +
\sum_{i\in\Ie}\left\{\frac{\gamma\e}3\int_{\p \Bie}|\nabla_\tau (P^2_{i,\e a}(\ue\cdot\vece))|^2\, d\H +\right. \nonumber
\\
& &\left. {\frac1 {4\pi\e^3 a^3}}\Big(\frac{3\lfl}2-\frac\gamma a\Big) \left(\int_{\p \Bie}  \ue\cdot\vece\,d\H\right)^2\right\}-\int_{\Oe}f\cdot \ue\, dx,
\label{eq.est.feue}
\end{eqnarray}
where we used \eqref{Vei-var} and the coercivity estimate \eqref{coer} in Appendix~A.
By \eqref{eq.cond-min} and because of the positive definite character of $Q$ we deduce that
\begin{equation}\label{eq.bd-ue}
\|\bE \ue\|^2_{L^2(\Oe)}\le C \|\ue\|_{L^2(\Oe)}.
\end{equation}
Appealing to \cite[Theorem~4.2]{OSY}, there exists a linear extension operator $$R^\e: H^1(\Oe;\R^3)\to H^1(\O;\R^3)$$ such that, for some constant $C$ independent of $\e$,
$$
\begin{array}{c}
\|R^\e u\|_{H^1(\O)} \le C\|u\|_{H^1(\Oe)}, \\[2mm]
\|\bE R^\e u\|_{L^2(\O)} \le C\|\bE u\|_{L^2(\Oe)}
\end{array}
$$
for every $u\in H^1(\Oe;\R^3)$.
Because of this result we actually obtain, in lieu of \eqref{eq.bd-ue}, that
$$
\|\bE R^\e\ue\|^2_{L^2(\O)}\le C \|R^\e\ue\|_{L^2(\O)},
$$
so that, using Korn and Poincar\'e-Korn inequalities on $\O$, we conclude that
\begin{equation}\label{eq.bd-ue-b}
\|R^\e\ue\|_{H^1(\O)}\le C.
\end{equation}
Further, by \eqref{eq.est.feue} and \eqref{eq.bd-ue-b} we deduce that
\begin{equation}\label{eq.bd-ue-b2}
\sum_{i\in\Ie}\left(\int_{\p \Bie}  \ue\cdot\vece\,d\H\right)^2\le C\e^{3}
\end{equation}
and
\begin{equation}\label{eq.bd-nabue}
\e\sum_{i\in\Ie} \int_{\p \Bie}|\nabla_\tau (P^2_{i,\e a}(\ue\cdot\vece))|^2\, d\H\le C.
\end{equation}

\medskip

Let
$$
H^1_\Dir(\O;\R^3):=\{u\in H^1(\O;\R^3): \ u=0\mbox{ on }\Dir\}.
$$
We propose to establish the following homogenization result.

\begin{theorem}\label{thm.hom}
The unique solution $\ue$ to \eqref{eq.eps-sys} can be extended to a function $R^\e\ue\in H^1(\O;\R^3)$ such that
$$
R^\e\ue \rightharpoonup u \quad \mbox{ weakly in } H^1(\O;\R^3),
$$
where $u$ is the unique solution in $H^1_\Dir(\O;\R^3)$ of
\begin{equation}\label{eq.hom-pb}
\begin{cases}
-\dive (\Aah\bE u)=(1-|B_a|)f \quad \mbox{ in }\O,\\[2mm]
u=0 \mbox{ on }\Dir, \quad (\Aah \bE u)\nu_{\p\O} =0 \mbox{ on }\p \O\setminus \ol\Dir,
\end{cases}
\end{equation}
with
\begin{equation}\label{eq.def-Ah}
\Aah F\cdot F :=
2\F_{per}(F, \chiF)
\end{equation}
for every $F\in \msym$ and $\chiF$ denotes the unique  minimizer of $\F_{per}(F,\cdot)$ defined in \eqref{eq.cell-pb} below.

Furthermore, the following corrector results {hold:} 
\begin{equation}\label{eq.resuco}
\lim_\e\int_{\omega\cap\omega^\e}\left|\bE_x\ue- \bE_x u- \frac1{\e^3}\sum_{i,j=1}^{3}\left(\int_{Y^{\kappa(x/\e)}_{\e}}(\bE_x u)_{ij}(z)\,dz\right) \bE_y\chiij(x/\e)\right|^2 dx=0
\end{equation}
for any $\omega\subset\subset\O$, and
\begin{multline}\label{eq.resuco2}
\lim_\e \e\sum_{i\in I_\e}\int_{\partial \Bie}\Bigg|\nabla_\tau (P^2_{i,\e a}( u^\ep\cdot \vece)) \\[2mm]
\left.-\frac1{\e^3}\nabla_\tau \Bigg(P^2_{a}\Bigg(\int_{Y^i_\e}\Big(\nabla_xu(z) \, y
+\sum_{j,k=1}^{3}(\bE_x u)_{jk}(z)\chijk(y)\Big)dz\cdot \vece\Bigg)\Bigg)\!\Big|_{y=x/\e}\right|^2d\H=0,
\end{multline}
where $P^2_a$ is the orthogonal projection in $L^2(\p B_a)$ onto  the orthogonal space to affine functions on $\p B_a$ while $P^2_{i,\e a}$ is the orthogonal projection in $L^2(\p \Bie)$ onto  the orthogonal space to affine functions on $\p \Bie$ (see \eqref{proj-e} in  Appendix~A) and
\begin{equation}\label{eq.def-chiij}
\chiij:=\chiFij
\end{equation}
with $(F_{ij})_{kh}=1/2(\delta_{ik}\delta_{jh}+\delta_{ih}\delta_{jk})$.
\end{theorem}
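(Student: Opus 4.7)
The plan is to treat Theorem~\ref{thm.hom} via the periodic unfolding method, suitably adapted to accommodate the interfacial contributions carried on the family of shrinking spheres $\p\Bie$, together with a $\Gamma$-convergence-style identification of the homogenized functional.

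\textbf{Compactness and two-scale structure.} The a priori bounds \eqref{eq.bd-ue-b}, \eqref{eq.bd-ue-b2} and \eqref{eq.bd-nabue} give, up to subsequence, $R^\e u^\e \rightharpoonup u$ weakly in $H^1(\O;\R^3)$ with $u = 0$ on $\Dir$. Introducing the unfolded field $\hue(x,y) := R^\e u^\e(\e\lfloor x/\e\rfloor + \e y)$ on $\O \times Y$, I expect to show the existence of $u^1 \in L^2(\O; H^1_{\sharp}(Y\setminus \ol B_a;\R^3))$ such that $\bE_y \hue \rightharpoonup \bE_x u(x) + \bE_y u^1(x,y)$ weakly in $L^2(\O \times (Y \setminus \ol B_a);\msym)$. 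Unfolding the surface bounds on the reference sphere $\p B_a$ via $y = (x - \e\lfloor x/\e\rfloor)/\e$, estimate \eqref{eq.bd-nabue} yields $\nabla_\tau P^2_a(\hue \cdot \vece) \rightharpoonup \nabla_\tau P^2_a(u^1 \cdot \vece)$ weakly in $L^2(\O \times \p B_a)$, while \eqref{eq.bd-ue-b2} annihilates the mean of $u^1 \cdot \vece$ on $\p B_a$ in the limit.

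\textbf{Cell problem and $\Gamma$-liminf.} For $F \in \msym$ I define $\F_{per}(F,\cdot)$ on $H^1_{\sharp}(Y \setminus \ol B_a;\R^3)$ by adding to $\tfrac12 \int_{Y \setminus \ol B_a} Q(F + \bE w)\,dx$ the surface term obtained from $\mathcal V^\e_i$ after unfolding and evaluation on the trial field $Fy + w$. Thanks to Remark~\ref{rm.posit} and the positive definiteness of $Q$, $\F_{per}(F,\cdot)$ is strictly convex and coercive modulo rigid motions, hence admits a unique minimizer $\chiF$; linearity in $F$ of the Euler--Lagrange equation gives $\chiF = \sum_{i,j} F_{ij}\chiij$ and hence \eqref{eq.def-Ah}. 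Rewriting $\F^\e(u^\e)$ in unfolded form, weak lower semicontinuity of each positive semi-definite quadratic contribution together with the strong $L^2$-convergence $R^\e u^\e \to u$ produces
\[
\liminf_\e \F^\e(u^\e) \ge \int_\O \!\Bigl( \F_{per}(\bE u(x), u^1(x,\cdot)) - (1-|B_a|) f(x)\cdot u(x) \Bigr) dx,
\]
which after pointwise minimization in $u^1(x,\cdot)$ reduces exactly to $\int_\O \bigl(\tfrac12 \Aah \bE u \cdot \bE u - (1-|B_a|) f \cdot u\bigr)\,dx$.

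\textbf{$\Gamma$-limsup and identification.} For smooth $v \in H^1_\Dir(\O;\R^3)$ I construct a recovery sequence by setting
\[
v^\e(x) := v(x) + \e \sum_{i,j=1}^3 \Bigl( \textstyle\frac{1}{|Y^{\kappa(x/\e)}_\e|}\int_{Y^{\kappa(x/\e)}_\e}(\bE v)_{ij}(z)\,dz\Bigr)\chiij(x/\e),
\]
truncated near $\Dir$ through a cutoff on a layer of width $\sqrt\e$ in order to enforce $v^\e \in \A^\e$. Continuity of $\bE v$ and periodicity of $\chiij$ give $\lim_\e \F^\e(v^\e) = \int_\O (\tfrac12 \Aah \bE v \cdot \bE v - (1-|B_a|) f\cdot v)\,dx$. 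Combined with the lower bound and the minimality inequality $\F^\e(u^\e) \le \F^\e(v^\e)$, this forces $u$ to minimize the homogenized functional on $H^1_\Dir(\O;\R^3)$, equivalently \eqref{eq.hom-pb}; positive definiteness of $\Aah$, inherited from coercivity of the cell problem, gives uniqueness and convergence of the full sequence.

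\textbf{Correctors and main obstacle.} Under minimality the lower bound above is an equality in the limit. Since the quadratic forms at play are positive semi-definite on the right subspaces, equality of norms upgrades the weak convergences of $\bE_y\hue$ and $\nabla_\tau P^2_a(\hue\cdot\vece)$ to \emph{strong} $L^2$-convergence, which after returning to physical coordinates with $u^1(x,y) = \sum_{i,j}(\bE_x u(x))_{ij}\chiij(y)$ translates into precisely \eqref{eq.resuco} and \eqref{eq.resuco2}. The main technical difficulty is the genuine coupling of bulk and surface correctors in the cell problem: the factor $1/\e^3$ multiplying $(\int u^\e\cdot\vece)^2$ has to be tamed through the affine-removal decomposition \eqref{Vei-var} and the spectral properties of spherical harmonics recalled in Appendix~A, the degeneracy of the projected surface quadratic form on degree-$0$ and degree-$1$ components has to be handled on the right subspace of $L^2(\p B_a)$, and the unfolding on the family of spheres $\p \Bie$ has to be tied to the bulk unfolding so that Remark~\ref{rm.posit} delivers simultaneous coercivity on both scales; without the spectral gap of the Laplace--Beltrami after removal of affine functions, the surface part of $\F_{per}$ would lose coercivity and the cell problem would be ill-posed.
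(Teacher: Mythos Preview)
Your strategy --- unfolding, weak lower semicontinuity for the lower bound, minimality against test functions for the upper bound, and upgrading weak to strong convergence of the unfolded fields from equality of the limits --- is exactly the paper's. Two points in your sketch are genuine gaps rather than routine details.

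First, the $Y$-periodicity of the two-scale corrector $u^1$ is not automatic from the unfolding you write down, and you give no mechanism for it. The paper does not assume it: it introduces the recentered unfolding $\hwe(x,y):=\tfrac1\e\bigl(\hue(x,y)-\fint_{Y\setminus\ol B_a}\hue(x,z)\,dz\bigr)$, compares its traces on opposite faces of $\p Y$ via the identity $\hue(x+\e\vec e_k,y)|_{y_k=-1/2}=\hue(x,y)|_{y_k=1/2}$, and passes to the limit using the strong $L^2$-convergence of $R^\e u^\e$ to show that $\hat u_1(x,y):=\hat w(x,y)-\nabla u(x)\,y$ is $Y$-periodic (see \eqref{eq.interm10}--\eqref{eq.def-u_1}). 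Without this step your cell problem is not posed on $H^1_\sharp$.

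Second, your assertion that \eqref{eq.bd-ue-b2} ``annihilates the mean of $u^1\cdot\vece$ on $\p B_a$'' is incorrect. After unfolding, \eqref{eq.bd-ue-b2} becomes a uniform bound on $\int_{\K}\bigl(\int_{\p B_a}\hwe\cdot\vece\bigr)^2dx$, not a vanishing statement; this mean survives in the limit and is precisely what feeds the last term of the cell energy \eqref{eq.cell-pb}. As a smaller divergence, for the upper bound the paper tests against $v^\e(x)=v(x)+\e v_1(x,x/\e)$ with generic $v_1\in C^1_c(\O;C^1_\sharp(Y\setminus\ol B_a;\R^3))$ and then invokes density of such $v_1$ in $\X_\O$ (see \eqref{eq.def-X}), rather than inserting the correctors $\chi_{ij}$ directly with a boundary cutoff; this avoids any regularity question on $\chi_{ij}$ and the need to estimate the cutoff layer.
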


In Theorem~\ref{thm.hom} above,  the cell problem is given by
{
$$
\min\left\{ \F_{per}(F,\psi): \ \psi \in \X\right\},
$$
where
\begin{equation}\label{eq.defXcell}
\X:=\{\psi\in H^1_\sharp(Y\setminus\ol B_a;\R^3): \psi\cdot\vece\in H^1(\p B_a)\}
\end{equation}
and}
\begin{multline}\label{eq.cell-pb}
\F_{per}(F,\psi):= \frac12 \int_{Y\setminus\ol B_a}  Q(\bE \psi+F)\, dy
+\frac\gamma2\int_{\p B_a}|\nabla_\tau P^2_a  ((\psi+Fy)\cdot\vece)|^2\, d\H
\\
-\frac{\gamma}{a^2}\int_{\p B_a}  |P^2_a  ((\psi+Fy)\cdot\vece)|^2 \, d\H
\\
+\frac{1}{2|B_a|}\left(\lfl-\frac{2\gamma}{3a}\right) \left(\int_{\p B_a}  (\psi+Fy)\cdot\vece\,d\H\right)^2
\end{multline}
and $P^2_a  $ is the orthogonal projection in $L^2(\p B_a)$ onto  the orthogonal space to affine functions on $\p B_a$ (see \eqref{proj} in  Appendix~A).

\begin{remark}\label{rk.eqform}
As could be easily seen from reproducing the computations leading to \eqref{identity} in  Appendix~A, an equivalent expression for $\F_{per}$ defined in \eqref{eq.cell-pb} above is
\begin{multline*}
\F_{per}(F,\psi)= \frac12 \int_{Y\setminus\ol B_a}  Q(\bE \psi+F)\, dy
+\frac\gamma2\int_{\p B_a}|\nabla_\tau ((\psi+Fy)\cdot\vece)|^2\, d\H
\\[2mm]-\frac{\gamma}{a^2}\int_{\p B_a}  ((\psi+Fy)\cdot\vece)^2 \, d\H
+\frac{\lfl}{2|B_a|} \left(\int_{\p B_a}  (\psi+Fy)\cdot\vece\,d\H\right)^2.
\end{multline*}
In that form it is clear that an argument analogous to that used in Remark~\ref{rm.posit} would demonstrate the existence and uniqueness of $\chiF$.
We also note that, because of Remark \ref{rm.posit} and of \eqref{eq.cond-min},
$$\Aah \mbox{ defined in \eqref{eq.def-Ah} is definite positive.}$$
\vskip-.7cm\hfill\P\end{remark}

\begin{remark}\label{rk.corrector}
Since 
$$\nabla u- \frac1{\e^3}\int_{Y^{\kappa(x/\e)}_{\e }}\nabla u(z)dz\stackrel{\e}{\longrightarrow} 0\mbox{ strongly in }L^2(\Omega;\mthree),$$
a simpler expression for the corrector result \eqref{eq.resuco} can be obtained, namely
\begin{multline*}
\lim_\e\Bigg\{\int_{\omega\cap\omega^\e}\left|\bE_x\ue- \bE_x u- \sum_{i,j=1}^{3}(\bE_x u)_{ij} \bE_y\chiij(x/\e)\right|^2 dx \\
+\e\sum_{i\in I_\e}\int_{\partial \Bie}\Bigg|\nabla_\tau (P^2_{i,\e a}( u^\ep\cdot e_r))  \\[2mm] 
- \nabla_\tau\Bigg(P^2_{a}\left(\Big(\nabla_xu \, y
+\sum_{j,k=1}^{3}(\bE_x u)_{jk}\chijk(y)\Big)\cdot \vece\right)\Bigg)\!\Big|_{y=x/\e}\Bigg|^2d\H
\Bigg\}=0,
\end{multline*}
provided that, either {$\bE_y \lambda_{ij}\in L^\infty(Y;\msym)$ and $\nabla_\tau\lambda_{ij}\in L^\infty(\p B_a;\R^3)$} for all $i,j \in \{1,2,3\}$, or that $u$ turns out to be sufficiently smooth.

While the regularity of $u$ will hinge, in particular, on the regularity of $f$, the $L^\infty$ regularity of $\bE_y\lambda_{ij}$ and of $\nabla_\tau\lambda_{ij}$ might be true, but we confess a lack of determination in the matter.  
 \hfill\P\end{remark}

We now proceed with the proof of Theorem~\ref{thm.hom}.

\begin{proof}[Proof of Theorem~\ref{thm.hom}]
We  define the unfolding of $\ue$ adapting the ideas in \cite{ADH,CDG} (see also \cite{Cas1}).
Define $\kappa:\R^3\to \Z^3$ so that
$$
x\in Y^{\kappa(x)}_1,
$$
that is, $\kappa(x)$ provides the center $i\in \Z^3$ of the cube $Y^i_1:=i+Y$ containing $x$.
In particular,
$$
x\in Y^i_\e \quad \text{ if and only if } \quad i=\kappa\Big({\frac x \e}\Big)
$$
for every $x\in \R^3$.
We define the unfolding $\hue:\K\times (Y\setminus \ol B_a)\to \R^3$~as
\begin{equation*}\hue(x,y)=\ue\Big(\e\kappa\Big({\frac x\e}\Big)+\e y\Big).
\end{equation*}
Observe that, for $x\in Y^i_\e$, $\hue$ does not depend on $x$, while as a function of $y$, it just comes from $\ue$ by the change of variables
$\ds y=(x-\e i)/\e$,
which transforms $Y^i_\e\setminus \bBie$ into  $Y\setminus \ol B_a$.

Using the definition of $\hue$ and recalling that $\oe=\cup_{i\in\Ie}(Y^i_\e\setminus \ol B^i_{\e a})$, we have
\beq\label{eq.unfol-grad}
\ba{l}\displaystyle\int_{\oe} |\nabla \ue(x)|^2\,dx=\sum_{i\in\Ie }\int_{Y^i_\e\setminus \bBie}|\nabla \ue(x)|^2\,dx=\e^3\sum_{i\in\Ie }\int_{Y\setminus \ol B_a}|\nabla \ue(\e i +\e y)|^2\,dy
\\[3mm] \displaystyle
= \frac{1}{\e^2}\sum_{i\in\Ie }\int_{Y^i_\e}\int_{Y\setminus \ol B_a}|\nabla_y \hue(x,y)|^2\,dy\,dx=
\frac1{\e^2}\int_{\K}\int_{Y\setminus \ol B_a}|\nabla_y \hue(x,y)|^2\,dy\,dx.
\ea
\eeq
Analogously, we obtain that
\begin{eqnarray}\label{eq.unfol-ex}
\int_{\oe} Q(\bE \ue)\,dx & = & \frac1{\e^2}\int_{\K}\int_{Y\setminus \ol B_a}Q(\bE_y \hue(x,y))\,dy\,dx
\\
\sum_{i\in\Ie }\!\int_{\p\Bie}\!|P^2_{i,\e a}(\ue\cdot \vece)  |^2d\H \!\!& = &\!\! \frac1{\e}\int_{\K}\int_{\partial B_a}\!|P^2_a  (\hue(x,y)\cdot\vece(y))|^2\,d\H\!(y)dx
\\
\sum_{i\in\Ie }\left(\int_{\p\Bie}\ue\cdot\vece\, d\H\right)^2\!\! & = & \e\int_{\K}\!\!\left(\int_{\partial B_a} \hue(x,y)\cdot\vece(y)\, d\H(y)\right)^2\!\!dx,
\end{eqnarray}
where $\vece(y):=y/{|y|}$ and $P^2_a  (\hue(x,\cdot)\cdot\vece)$ denotes the orthogonal projection in $L^2(\p B_a)$ of the function $y\mapsto \hue(x,y)\cdot\vece(y)$
onto  the orthogonal space to affine functions on $\p B_a$. Moreover,
\beq\label{eq.unfol-ex2}
\sum_{i\in\Ie }\int_{\p\Bie}|\nabla_\tau (P^2_{i,\e a}(\ue\cdot\vece))|^2\,d\H =\frac1{\e^3}\int_{\K}\int_{\partial B_a}|\nabla_{\tau,y} (P^2_a  (\hue(x,\cdot)\cdot\vece))|^2\,d\H(y)\, dx.
\eeq

In view of \eqref{eq.bd-ue-b}--\eqref{eq.bd-nabue}, we conclude in particular that
\begin{multline*}
\frac1{\e^2} \int_{\K}\int_{Y\setminus \ol B_a}|\nabla_y \hue(x,y)|^2\,dy\,dx
+\frac1{\e^2} \int_{\K}\int_{\partial B_a}|\nabla_{\tau,y} (P^2_a  (\hue(x,\cdot)\cdot\vece))|^2\,d\H(y)\, dx
\\
+\frac1{\e^2} \int_{\K}\left(\int_{\partial B_a} \hue(x,y)\cdot\vece(y)\, d\H(y)\right)^2\!\!dx \le C.
\end{multline*}
Let now
\begin{eqnarray}
\hwe(x,y) & := & \frac1\e\hue(x,y)- \frac1\e \fint_{Y\setminus \ol B_a}\hue(x,z)\, dz
\nonumber \\
& = & \frac1\e \hue(x,y) - \frac1\e \fint_{Y^{\kappa(\frac x\e)}_{\e }\setminus \ol B^{\kappa(\frac x\e)}_{\e a}}\ue(z)\, dz. \label{eq.unfol}
\end{eqnarray}
By the previous bounds and Poincar\'e-Wirtinger's inequality applied to $Y\setminus \ol B_{a}$,
a (not relabeled) subsequence of $\hwe$
satisfies
\begin{equation}\label{eq.lim.uxy}
\hwe \rightharpoonup \hat w \quad \mbox{ weakly in } L^2(\omega; H^1(Y\setminus\ol B_a;\R^3))
\end{equation}
and
\begin{equation}\label{eq.lim.uxy2}
P^2_a  (\hwe\cdot\vece) \rightharpoonup P^2_a  (\hat w\cdot\vece) \quad \mbox{ weakly in } L^2(\omega; H^1(\p B_a))
\end{equation}
for any open set $\omega\subset\subset \O$ and for some $\hat w\in L^2_{\rm loc}(\O;H^1(Y\setminus\ol B_a;\R^3))$ such that $\hat w\cdot\vece\in L^2_{\rm loc}(\O; H^1(\p B_a))$.

Further,  in view of \eqref{eq.bd-ue-b}, we can assume that
\begin{equation}\label{eq.conv=Rue}
R^\e\ue \rightharpoonup u \quad \mbox{ weakly in } H^1(\O; \R^3).
\end{equation}

Take  $\vec{e}_1$ as the first vector of the canonical basis in $\RR^3$ and note that
the definition of $\hat u_\ep$ implies that
$$
\hat u_\ep\Big(x+\ep \vec e_1,-{\frac1 2},y_2,y_3\Big)=\hat u_\ep\Big(x,{\frac 1 2  },y_2,y_3\Big)$$
for a.e.\ $x\in \K$ and a.e.\ $(y_2,y_3)\in (-1/2,1/2)^2$.
Thus
the definition \eqref{eq.unfol} of $\hwe$  implies in turn that
\begin{multline}\label{eq.interm10}
\hat w_\ep\Big(x+\ep \vec e_1,-{\frac 1 2  },y_2,y_3\Big)-\hat w_\ep\Big(x,{\frac 1 2  },y_2,y_3\Big)
\\
=-\fint_{Y^{\kappa(\frac x\e)}_{\e }\setminus \ol B^{\kappa(\frac x\e)}_{\e a}}\frac{R^\ep u^\ep(z+\ep\vec{e}_1)-R^\ep u^\ep (z)}\e\, dz
\end{multline}
for a.e.\ $x\in \K$ and a.e.\ $(y_2,y_3)\in (-1/2,1/2)^2$.
Thus,  passing to the limit, we get
\beq\label{eq.interm1}
\hat w\Big(x,-{\frac 1 2  },y_2,y_3\Big)-\hat w\Big(x,{\frac 1 2},y_2,y_3\Big)=-\frac{\p u}{\p x_1}(x).
\eeq
Indeed, taking $\ph\in C^\infty_c(\Omega;\R^3)$ and integrating \eqref{eq.interm10}
over the support $K$ of $\ph$ we get, for $\e$ small enough,
\begin{eqnarray*}
\lefteqn{\int_K \Big(\hat w_\e\Big(x+\e \vec{e}_1,-{\frac 1 2  },y_2,y_3\Big)-\hat w_\ep\Big(x,{\frac 1 2  },y_2,y_3\Big)\Big)\ph(x)\, dx}
\\
& = & \int_{\K} \Big(\hat w_\e\Big(x+\e \vec{e}_1,-{\frac 1 2  },y_2,y_3\Big)-\hat w_\ep\Big(x,{\frac 1 2  },y_2,y_3\Big)\Big)\ph(x)\, dx
\\
& = & - \sum_{i\in\Ie} \int_{Y^i_{\e}}\fint_{Y^i_{\e }\setminus \ol B^i_{\e a}}\frac{R^\ep u^\ep(z+\ep\vec{e}_1)-R^\ep u^\ep (z)}\e \ph(x) \,dz\,dx
\\
& = & - \sum_{i\in\Ie} \int_{Y^i_{\e}}\fint_{Y^i_{\e }\setminus \ol B^i_{\e a}}\frac{R^\ep u^\ep(z+\ep\vec{e}_1)-R^\ep u^\ep (z)}\e \ph(z) \,dz\,dx +O(\e)
\\
& = & -\frac{1}{1-|B_a|}\int_{\oe}\frac{R^\ep u^\ep(z+\ep\vec{e}_1)-R^\ep u^\ep (z)}\e \ph(z) \,dz +O(\e)
\\
& = & -\frac{1}{1-|B_a|}\int_{\oe}\frac{\ph(z-\ep\vec{e}_1)- \ph (z)}\e R^\e u^\e(z) \,dz +O(\e).
\end{eqnarray*}
Since by periodicity $\chi_{\oe}\rightharpoonup 1-|B_a|$ weakly$^*$ in $L^\infty(\omega)$ for every $\omega\subset\subset\Omega$ and $R^\e u^\e\to u$ strongly in $L^2(\Omega;\R^3)$ by \eqref{eq.conv=Rue} and Rellich Theorem,
this yields \eqref{eq.interm1}.

Reasoning analogously with respect to the other vectors of the canonical basis, we conclude that
\begin{equation}\label{eq.def-u_1}
\hat u_1(x,y):=\hat w(x,y)-\nabla u(x)y \in L^2_{\rm loc}(\O;H^1_\sharp (Y\setminus \ol B_a;\R^3)).
\end{equation}

We now consider $v\in C^\infty(\ol\Om;\R^3)$ with $v=0$ on $\Dir$ and $v_1\in C^1_c(\Om;C^1_\sharp(Y\setminus \ol B_a;\R^3))$. Define $v^\ep$ by
\beq\label{eq.ve}
v^\ep(x)=v(x)+\ep v_1\Big(x,{\frac x\e}\Big).\eeq
By minimality we have
\begin{equation}\label{eq.uemi1}
{\mathcal  F}^\ep(u^\ep) \leq {\mathcal  F}^\ep(v^\ep).
\end{equation}

By definition of ${\mathcal  F}^\ep$ and \eqref{Vei-var}, we have
\begin{eqnarray}
{\mathcal  F}^\ep(v^\ep) & = & \frac12 \int_{\Om^\ep} Q(\bE v^\ep)\,dx
+\sum_{i\in\Ie} \frac{\gamma \ep}{2}\int_{\p\Bie}|\nabla_\tau (P^2_{i,\e a}(v^\ep\cdot \vece  ))|^2\,d{\mathcal  H}^2
\nonumber \\
&& {} - \sum_{i\in\Ie} \frac{\gamma}{\ep a^2}\int_{\p\Bie}|P^2_{i,\e a}(v^\ep\cdot \vece)  |^2\,d{\mathcal  H}^2
\nonumber \\
&&  {}+ \sum_{i\in\Ie} \frac1{4\pi\e^3 a^3}
 \Big(\frac{3\lfl}2-\frac\gamma a\Big)\left(\int_{\p\Bie}v^\ep\cdot \vece  \,d{\mathcal  H}^2\right)^2
 \nonumber \\
&&  {}
 -\int_{\Om^\ep}f\cdot v^\ep\, dx. \label{eq.terFev}
 \end{eqnarray}
 Let us pass to the limit in the different terms of the right-hand side of \eqref{eq.terFev}.
 The first term yields with obvious notation
 \begin{eqnarray}
\frac12 \int_{\Om^\ep} Q(\bE v^\ep)\,dx & = & \frac12 \int_{\Om^\ep} Q(\bE_xv(x)+\bE_yv_1(x,x/\e))\,dx+O(\e)
\nonumber \\
& = & \frac12 \sum_{i\in\Ie} \int_{Y^i_{\e }\setminus \ol B^i_{\e a}}Q(\bE_xv(\e i)+\bE_yv_1(\e i,x/\e))\,dx+O(\e) \nonumber
\\
& = & \frac12 \sum_{i\in\Ie} \e^3\int_{Y\setminus \ol B_a}Q(\bE_xv(\e i)+\bE_yv_1(\e i,y))\,dy+O(\e) \nonumber
\\
& = & \frac12 \int_{\K} \int_{Y\setminus B_a}Q(\bE_xv(x)+\bE_yv_1(x,y))\,dy\,dx+O(\e). \label{eq.terFev1}
 \end{eqnarray}
For the terms in \eqref{eq.terFev} on the boundary of the balls $\Bie$ we write
$$
v^\e(x)= v(\e i)+\nabla_x v(\e i)(x-\e i)+\e v_1(\e i, x/\e)+\omega^i_\e(x)
$$
for $x\in\partial\Bie$, where $\|\omega^i_\e\|_{C^0(\p\Bie)}=O(\e^2)$ and $\|\omega^i_\e\|_{C^1(\p\Bie)}=O(\e)$.
Using that $P^2_{i,\e a}(v(\e i)\cdot\vece)=0$, we obtain for the second term in \eqref{eq.terFev}
 \begin{eqnarray}
 \lefteqn{\frac{\gamma \ep}2\sum_{i\in\Ie}\int_{\partial \Bie}|\nabla_\tau (P^2_{i,\e a}(v^\ep\cdot \vece))|^2\,d\H}
 \nonumber \\
& =&
\frac{\gamma \ep^2}2\sum_{i\in\Ie}\int_{\p\Bie}\big|\nabla_{\tau,x}(P^2_{i,\e a}(a\nabla_x v(\ep i)\vece\cdot \vece  +v_1(\e i,x/\e)\cdot \vece)) \big|^2\,d\H+O(\e)
\nonumber \\
& = & \frac\gamma{2}\int_{\K}\int_{\partial B_a} \big|\nabla_{\tau,y}(P^2_a  (\nabla_x v\, y \cdot \vece  + v_1\cdot\vece)) \big|^2\,d\H(y)\, dx+O(\e). \label{eq.terFev2}
 \end{eqnarray}
Arguing in a similar way, the third term can be written as
 \begin{eqnarray}
 \lefteqn{\frac{\gamma}{\ep a^2}\sum_{i\in\Ie}\int_{\p\Bie}|P^2_{i,\e a}(v^\ep\cdot \vece)  |^2\,d\H}
 \nonumber \\
 & = & \frac{\gamma}{ a^2}\sum_{i\in\Ie}\int_{\p\Bie}\Big|P^2_{i,\e a}(a\nabla_x v(\ep i)\vece\cdot \vece+v_1(\e i, x/\e)\cdot\vece)  \Big|^2\,d\H +O(\e)
\nonumber \\
 & = &
\frac{\gamma}{a^2}\int_{\K}\int_{\partial B_a}\big|P^2_a  (\nabla_x v\, y\cdot   \vece +v_1\cdot\vece)  \big|^2\,d\H(y)\, dx+O(\e). \label{eq.terFev3}
\end{eqnarray}
For the fourth term we get
\begin{eqnarray}
\lefteqn{\frac1{4\pi\e^3 a^3}\sum_{i\in\Ie}\left(\int_{\partial \Bie}v^\ep\cdot \vece  \,d\H\right)^2}
\nonumber
\\
& = &\frac1{4\pi\e a^3}\sum_{i\in\Ie}\left(\int_{\p\Bie}(a\nabla_x v(\ep i)\vece\cdot\vece+ v_1(\e i,x/\ep)\cdot \vece)   \, d\H\right)^2+O(\e)
\nonumber \\
& = &\frac1{4\pi a^3}\int_{\K}\left(\int_{\partial B_a}\big(\nabla_x v(x) y\cdot   \vece +v_1(x,y)\cdot \vece  \big)\,d\H(y)\right)^2\!\! dx+O(\ep).
\label{eq.terFev4}
\end{eqnarray}
Finally, since by periodicity $\chi_{\oe\cap\omega}\rightharpoonup (1-|B_a|)\chi_\omega$ weakly in $L^2(\O)$ for every $\omega\subset\subset\Omega$, it is easily concluded, upon letting $\omega\nearrow\O$, that
\beq\label{eq.terFev5}
\int_{\Om^\ep}f\cdot v^\ep\, dx=\int_{\Om^\ep}f\cdot v \,dx+O(\e)\longrightarrow\big(1-|B_a|\big)\into f\cdot v\,dx.
\eeq

Collecting \eqref{eq.terFev1}--\eqref{eq.terFev5} and letting $\e$ tend to $0$, we finally obtain that, for $v^\e$ as in \eqref{eq.ve},
\begin{eqnarray}
\lim_\e {\mathcal  F}^\ep(v^\ep) &= &
\frac12 \into \int_{Y\setminus B_a} Q(\bE_xv(x)+\bE_yv_1(x,y))\,dy\,dx
\nonumber \\
&&
{}+\frac\gamma{2}\into\int_{\partial B_a} \big|\nabla_{\tau,y}(P^2_a  ( \nabla_x v(x)\, y\cdot   \vece +v_1(x,y)\cdot\vece))  \big|^2\, d\H(y)\, dx
\nonumber \\
&&
{}-\frac\gamma{a^2}\int_{\O}\int_{\partial B_a}\big|P^2_a  (\nabla_x v(x)\, y\cdot   \vece +v_1(x,y)\cdot\vece)\big|^2\,d\H(y)\,dx
\nonumber \\
&&
{}+\frac1{4\pi a^3}\Big(\frac{3\lfl}2-\frac\gamma a\Big)\int_{\O}\left(\int_{\partial B_a}\big(\nabla_x v(x) y\cdot   \vece +v_1(x,y)\cdot \vece  \big)\,d\H(y)\right)^2\!\! dx
\nonumber \\
&&
{}-(1-|B_a|\big)\into f(x)\cdot v(x)\,dx. \label{eq.limv}
\end{eqnarray}

On the other hand, recalling \eqref{Vei-var} and  making use of \eqref{eq.unfol-ex}--\eqref{eq.unfol-ex2} and of the definition \eqref{eq.unfol} of $\hat w^\e$, we have that for $\omega\subset\subset\O$ and $\e$ small enough,
\begin{eqnarray}
{\mathcal  F}^\ep(u^\ep)& \ge &
 \frac12 \int_{\omega}\int_{Y\setminus \ol B_a}Q(\bE_y \hwe(x,y))\,dy\,dx
 \nonumber \\
 &&
{}+ \frac\gamma2 \int_{\omega}\int_{\partial B_a}|\nabla_{\tau,y}(P^2_a  (\hwe(x,y)\cdot\vece(y)))|^2\,d\H(y)\, dx
\nonumber \\
&& {}-
 \frac\gamma{a^2}\int_{\omega}\int_{\partial B_a}|P^2_a  (\hwe(x,y)\cdot \vece(y))|^2\,d\H(y)\,dx
 \nonumber \\
 &&
 {} + \frac1{4\pi  a^3}
 \Big(\frac{3\lfl}2-\frac\gamma a\Big)\int_{\omega}\left(\int_{\partial B_a}\hwe(x,y)\cdot\vece(y)\,d\H(y)\right)^2\!\!dx-\int_{\Oe} f\cdot \ue\, dx
 \nonumber \\
 & =: & \G(\hwe)-\int_{\Oe} f\cdot \ue\, dx,
 \label{eq.>liminf}
\end{eqnarray}
where we also used that for a.e.\ $x$
\begin{multline}
\frac\gamma2 \int_{\partial B_a}|\nabla_{\tau,y}(P^2_a  (\hwe(x,\cdot)\cdot\vece))|^2\,d\H-
 \frac\gamma{a^2}\int_{\partial B_a}|P^2_a  (\hwe(x,\cdot)\cdot \vece)|^2\,d\H
 \\
 + \frac1{4\pi  a^3}
 \Big(\frac{3\lfl}2-\frac\gamma a\Big)\left(\int_{\partial B_a}\hwe(x,y)\cdot\vece(y)\,d\H(y)\right)^2\geq0
\end{multline}
by \eqref{ecdt3-0} with $\e=1$, and \eqref{eq.cond-min}.

Now, the inequality above and the positive definite character of $Q$ imply that the quadratic functional $\G$ defined in \eqref{eq.>liminf} is non-negative, hence convex on the space of functions $\X_\omega$ with 
\begin{equation}\label{eq.def-X}
\begin{array}{rcl}
\X_\omega &:= & \big\{ w\in L^2(\omega; H^1(Y\setminus\ol B_a;\R^3)): \ P^2_a  (w\cdot\vece)\in L^2(\omega; H^1(\p B_a))\big\}
\smallskip \\
& = & \big\{ w\in L^2(\omega; H^1(Y\setminus\ol B_a;\R^3)): \ w\cdot\vece\in L^2(\omega; H^1(\p B_a))\big\} .
\end{array}
\end{equation}
Thus, in view of convergences \eqref{eq.lim.uxy}--\eqref{eq.lim.uxy2} and of \eqref{eq.def-u_1}, weak lower semicontinuity yields
\begin{eqnarray}
\lefteqn{\liminf_\ep {\mathcal  F}^\ep(u^\ep)}
\nonumber \\
& \geq & \frac12 \int_{\omega} \int_{Y\setminus B_a}Q(\bE_xu(x)+\bE_y\hat u_1(x,y))\, dy\, dx
\nonumber \\
&& {} + \frac{\gamma}{2}\int_{\omega} \int_{\partial B_a} |\nabla_{\tau,y}P^2_a   (\nabla_x u(x)\, y \cdot   \vece(y) + \hat u_1(x,y)\cdot \vece(y))|^2\,d\H(y)\, dx
\nonumber \\
&& {}
- \frac\gamma{a^2}\int_{\omega}\int_{\partial B_a} |P^2_a   (\nabla_x u(x)\, y\cdot   \vece(y) + \hat u_1(x,y)\cdot \vece(y))|^2\,d\H(y)\, dx
\nonumber \\
&& {} + \frac1{4\pi  a^3}
 \Big(\frac{3\lfl}2-\frac\gamma a\Big)
 \int_{\omega}\left(\int_{\partial B_a} (\nabla_x u(x)y\cdot\vece(y) +\hat u_1(x,y)\cdot \vece(y) )\, d\H(y)\right)^2\!\!dx
 \nonumber \\
&& {}-
\limsup_\e \int_{\Om^\ep}f\cdot u^\ep\,dx. \label{eq.terFev6}
\end{eqnarray}
Now, for any $\omega_\eta\subset\subset\O$ with $|\O\setminus\omega_\eta|\le \eta$, we may write
$$
\int_{\Oe\cap\,\omega_\eta}f\cdot\ue\, dx=\int_{\Oe\cap\,\omega_\eta}f\cdot R^\e\ue\, dx=\int_{\omega_\eta}\chi_{Y\setminus \ol B_a}(x/\e)f\cdot R^\e\ue\, dx.
$$
Therefore, in view of \eqref{eq.conv=Rue} and Rellich's Theorem we have
$$
\int_{\Oe\cap\,\omega_\eta}f\cdot\ue\, dx \longrightarrow (1-|B_a|)\int_{\omega_\eta} fu\,dx.
$$
Since
 $$\left|\int_{\Oe\setminus\omega_\eta}f\cdot\ue dx\right|\le C\|f\|_{L_2(\O\setminus \omega_\eta)}\longrightarrow 0
 $$
 as $\eta\to0$, we deduce that
 \beq\label{eq.limfu}
 \lim_\e\int_{\Oe}f\cdot\ue \,dx=(1-|B_a|)\int_{\O} fu\,dx.
 \end{equation}

By \eqref{eq.limfu} and by letting $\omega\nearrow\O$ in \eqref{eq.terFev6}, we conclude  that $\hat u_1\in L^2(\O;H^1(Y\setminus\ol B_a;\R^3))$ and $P^2_a  (\hat u_1\cdot\vece)\in L^2(\O; H^1(\p B_a;\R^3))$ (and not only locally as in  \eqref{eq.def-u_1} and as implied by \eqref{eq.lim.uxy}) and that
\begin{eqnarray}
\lefteqn{\liminf_\ep {\mathcal  F}^\ep(u^\ep)}
\nonumber \\
& \geq & \frac12 \int_{\Omega} \int_{Y\setminus B_a}Q(\bE_xu(x)+ \bE_y\hat u_1(x,y))\, dy\, dx
\nonumber \\
&& {} + \frac{\gamma}{2}\int_{\Omega} \int_{\partial B_a} |\nabla_{\tau,y}P^2_a   (\nabla_x u(x)\, y \cdot   \vece + \hat u_1(x,y)\cdot \vece(y))|^2\,d\H(y)\, dx
\nonumber \\
&& {}
- \frac\gamma{a^2}\int_{\Omega}\int_{\partial B_a} |P^2_a   (\nabla_x u(x)\, y\cdot   \vece + \hat u_1(x,y)\cdot \vece(y))|^2\,d\H(y)\, dx
\nonumber \\
&& {} + \frac1{4\pi  a^3}
 \Big(\frac{3\lfl}2-\frac\gamma a\Big)
 \int_{\Omega}\left(\int_{\partial B_a} (\nabla_x u(x)y\cdot\vece(y) + \hat u_1(x,y)\cdot \vece(y) )\, d\H(y)\right)^2\!\!dx
 \nonumber \\
&& {}- (1-|B_a|)\int_{\O}f(x)\cdot u(x)\,dx. \label{eq.terFev7}
\end{eqnarray}

By \cite[Lemmas~A.1 and~A.2]{ADMLP} and the assumptions on $\Dir$
the set $\{v\in C^\infty(\ol\Om;\R^3): v=0 \mbox{ on }\Dir\}$
is dense in $H^1_\Dir(\O;\R^3)$.
Further, $C^1_c(\Om;C^1_\sharp(Y\setminus \ol B_a;\R^3))$ is, of course, dense in  $L^2(\O; H^1_\sharp(Y\setminus\ol B_a;\R^3))$ but also in $\X_\Omega$ defined in \eqref{eq.def-X} because $C^1_\sharp(Y\setminus \ol B_a;\R^3)$ is dense in $\X$ defined in \eqref{eq.defXcell}.
 Indeed, take $\psi\in \X$  and $\psi_n\in C^\infty_\sharp(Y\setminus \ol B_a;\R^3)$,  $g_n\in C^\infty(\p B_a)$ converging strongly to $\psi$ and $\psi\cdot\vece$ in $H^1_\sharp(Y\setminus\ol B_a;\R^3)$ and $H^1(\p B_a)$, respectively. Solve, with periodic boundary conditions on $\p Y$,
\begin{equation*}
\begin{cases}
\Delta v_n=\Delta (\psi_n\cdot\vece) \mbox{ in } Y\setminus \ol B_a,\\[2mm] v_n=g_n \mbox{ on }\p B_a,\end{cases}
\end{equation*} 
so that $v_n\in C^\infty_\sharp(Y\setminus\ol B_a)$ converges to $\psi\cdot\vece$  strongly in $H^1(Y\setminus \ol B_a)$ {and ${v_n}|_{\p B_a}$} converges to $\psi\cdot\vece$  strongly in $H^1(\p B_a)$. 
{
Let $\eta>0$ be such that $a+\eta<1/2$ and let $\varphi\in C^\infty_c(B_{a+\eta})$ be a cut-off function such that $\varphi=1$ on $B_{a+\eta/2}$. Define
$$
\zeta_n:=\varphi (v_n- \psi_n\cdot\vece) \vece+\psi_n.
$$
Clearly,  $\zeta_n\in C^\infty_\sharp(Y\setminus\ol B_a;\R^3)$ for every $n$, $\zeta_n\to \psi$ strongly in $H^1_\sharp(Y\setminus\ol B_a;\R^3)$, and $\zeta_n\cdot\vece\to \psi\cdot\vece$ strongly in $H^1(\p B_a)$.}

In view of \eqref{eq.uemi1}, \eqref{eq.limv}, and \eqref{eq.terFev7},
these density results establish that $(u,\hat u_1)$ is a solution of the problem
\begin{multline}\label{eq.pbmili}
\min\left\{\into \F_{per}(\bE_x v(x),v_1)\, dx-(1-|B_a|)\into f\cdot v\,dx:\right.\\[2mm] \left. (v,v_1)\in H^1_\Dir(\Om;\R^3)\times \X_\Omega\right\}
\end{multline}
with $\F_{per}$ defined in \eqref{eq.cell-pb} {and $\X_\Omega$ in \eqref{eq.def-X}}.

Further the minimizing pair $(u,\hat u_1)$ is unique by Remark~\ref{rk.eqform}, which implies the uniqueness of $\nabla_x u \, y+\hat u_1 \in  L^2(\Om;H^1_\sharp (Y\setminus \ol B_a;\R^3))$, hence of  $(u,\hat u_1)$ in $H^1_\Dir(\Om;\R^3)\times L^2(\Om;H^1_\sharp (Y\setminus \ol B_a;\R^3))$. In particular,
convergence of $(\hwe)$ holds along the whole sequence and not only along a suitable subsequence.

Moreover, by taking the $\limsup$ instead of the $\liminf$ in \eqref{eq.terFev6}, we actually get from \eqref{eq.limv}, \eqref{eq.>liminf}, together with the already mentioned density argument that, for any $\omega\subset\subset\O$, as $\e\to 0$,
\begin{equation}\label{eq.conv-xy1}
\bE_y \hwe\to \bE_x u+\bE_y \hat u_1 \mbox{ strongly in }L^2(\omega;L^2(Y\setminus\ol B_a;\R^3))
\end{equation}
 and
\begin{equation}\label{eq.conv-xy2}
\nabla_{\tau,y}(P^2_a  (\hwe(x,\cdot)\cdot\vece)) \to \nabla_{\tau,y}(P^2_a  (\nabla_x u\, y\cdot   \vece +\hat u_1\cdot \vece)) \mbox{ strongly in }L^2(\omega;L^2(\p B_a;\R^3)).
\end{equation}

Note that, in view of  \eqref{eq.pbmili} and of the definition of $\chiF$ as {the} minimizer of \eqref{eq.cell-pb}
for any $F\in \msym$, 
\begin{equation}\label{eq.exp-u1}
\hat u_1(x,\cdot)=\sum_{i,j=1}^{3}(\bE_x u(x))_{ij} \chiij,
\end{equation}
 where $\chiij$ is defined in \eqref{eq.def-chiij}.  

Then remark that  \eqref{eq.pbmili} also reads as
$$
\min\left\{\frac12 \into \Aah \bE v(x)\cdot \bE v(x)\, dx-(1-|B_a|)\into f\cdot v\,dx: \ v\in H^1_\Dir(\Om;\R^3)\right\}
$$
with $\Aah$ defined in \eqref{eq.def-Ah}, which, together with \eqref{eq.exp-u1}, delivers \eqref{eq.hom-pb}.

Finally,    convergences \eqref{eq.conv-xy1} {and} \eqref{eq.conv-xy2} above can in turn be rewritten in terms of $\ue$ as follows. First, for $\omega\subset\!\subset \Omega$, consider $I^\omega_\e\subset I_\e$ {the set of indices $i$ such that $Y^i_\e\subset \omega$} and set $\tilde\omega^\e:=\cup_{i\in I^\omega_\e}(Y^i_\e\setminus  \bBie)$. Then,
{\begin{eqnarray}
\lefteqn{\int_{\tilde\omega^\e}\left|\bE_x\ue(x)-\frac1{\e^3}\int_{Y^{\kappa(x/\e)}_{\e }}\left(\bE_x u(z)+\bE_y\hat u_1(z, x/\e)\right)dz\right|^2 dx}
\nonumber
\\ 
& = &
\sum_{i\in I^\omega_\e}\int_{Y^i_\e\setminus \bBie}\left| \frac1{\e^3}\int_{Y^i_\e}\left(\bE_x  \ue(x)- \bE_x u(z)-\bE_y\hat u_1(z,x/\e)\right)dz     \right|^2  dx
\nonumber \\
& \le & \frac1{\e^3}\sum_{i\in I^\omega_\e}\int_{Y^i_\e\setminus \bBie}\int_{Y^i_\e}\left|\bE_x  \ue(x)- \bE_x u(z)-\bE_y\hat u_1(z,x/\e)\right|^2dz       dx
\nonumber
\\ 
& \le & \int_{\omega}\int_{Y\setminus\ol B_a}\left|1/\e \bE_y\hue(x,y)-\bE_x u(x)-\bE_y\hat u_1(x,y)\right|^2 dydx
\nonumber
\\
& = &
\int_{\omega}\int_{Y\setminus\ol B_a}\left| \bE_y\hwe(x,y)- \bE_x u(x)-\bE_y\hat u_1(x,y)\right|^2 dydx   \stackrel{\e}{\to} 0
\label{eq.conv-cor-0}
\end{eqnarray}
}
where we have used  \eqref{eq.unfol} and \eqref{eq.conv-xy1}.

Since 
$$\bE_x u- \frac1{\e^3}\int_{Y^{\kappa(x/\e)}_{\e }}\bE_x u(z)dz\stackrel{\e}{\longrightarrow} 0\quad \mbox{ strongly in }L^2(\Omega;\mthree),$$
 we conclude from \eqref{eq.conv-cor-0} that 
$$
\int_{\tilde\omega^\e}\left|\bE_x\ue(x)-\bE_x u(x)-\frac1{\e^3}\int_{Y^{\kappa(x/\e)}_{\e }}\bE_y\hat u_1(z, x/\e)dz\right|^2 dx \stackrel{\e}{\longrightarrow} 0,
$$
hence, in view of \eqref{eq.exp-u1}, that 
\begin{equation}\label{eq.strong1}
\int_{\tilde\omega^\e}\left|\bE_x\ue- \bE_x u- \frac1{\e^3}\sum_{i,j=1}^{3}\left(\int_{Y^{\kappa(x/\e)}_{\e}}(\bE_x u)_{ij}(z)\,dz\right) \bE_y\chiij(x/\e)\right|^2 dx  \stackrel{\e}{\longrightarrow} 0.
\end{equation}

Similarly, in the notation of the statement of Theorem \ref{thm.hom}, one can obtain 
{\begin{multline}\label{eq.strong2}
\e\sum_{i\in I^\omega_\e}\int_{\partial \Bie}\Bigg|\nabla_\tau (P^2_{i,\e a}( u^\ep\cdot \vece)) \\[2mm]
\left.-\frac1{\e^3}\nabla_\tau \Bigg(P^2_{a}\Bigg(\int_{Y^i_\e}\Big(\nabla_xu(z) \, y
+\sum_{j,k=1}^{3}(\bE_x u)_{jk}(z)\chi_{jk}(y)\Big)dz\cdot \vece\Bigg)\Bigg)\!\Big|_{y=x/\e}\right|^2d\H\\ \stackrel{\e}{\longrightarrow} 0.
\end{multline}
Equations \eqref{eq.strong1} and  \eqref{eq.strong2} yield \eqref{eq.resuco} and \eqref{eq.resuco2}. Indeed, we can replace $\tilde\omega^\e$ and $I^\omega_\e$ by $\omega\cap\oe$ and $I_\e$  in \eqref{eq.strong1} and \eqref{eq.strong2}, respectively, upon choosing, for any $\omega \subset\!\subset\Omega$,  another set $\omega'\subset\!\subset\Omega$ containing all $Y^i_\e$'s that intersect $\omega$.}

In view of \eqref{eq.conv=Rue},  the proof of Theorem~\ref{thm.hom} is complete.
\end{proof}

\subsection{Elastic enhancement.}\label{sec.enhance}

In this last subsection, we compare the homogenized behavior obtained in the first subsection with that of the {\color{black} elastomer without the inclusions}. We  establish in a specialized setting  the following counterintuitive result: a large enough surface tension will produce elastic enhancement (stronger elasticities) in spite of the lack of resistance to shear in the fluid inclusions.

To that effect we propose to  compare between $1/2 \Aa F\cdot F$ (the elastic energy  for a given constant strain $F$ associated with  the original material occupying the whole volume) to  $1/2 \Aah F\cdot F=\F_{per}(F, \chiF)$.
This will be done through a ``dualization" process.

First we recall the expression \eqref{eq.cell-pb} for $\F_{per}$ as well as  \eqref{ecdt3-0} in  Appendix~A.
We obtain the following inequality for every {$F\in\msym$ and $\psi\in \X$:}
 \begin{multline*}
\F_{per}(F,\psi)\ge \frac12 \int_{Y\setminus\ol B_a}  Q(\bE \psi+F)\, dy
+\frac\gamma3\int_{\p B_a}|\nabla_\tau P^2_a  ((\psi+Fy)\cdot\vece)|^2\, d\H
\\
+\frac{1}{2|B_a|}\left(\lfl-\frac{2\gamma}{3a}\right) \left(\int_{\p B_a}  (\psi+Fy)\cdot\vece\,d\H\right)^2.
\end{multline*}
Taking a supremum over  {triplets} $(\sigma,\xi,t)\in L^2(Y\setminus\ol B_a;\msym)\times L^2(\p B_a;\R^3)\times\R$ with
\begin{equation}\label{eq.cond-sigma}
\begin{cases}\dive\sigma=0 \mbox{ in } Y\setminus\ol B_a,\\[2mm]
\sigma\nu \mbox{ anti-periodic on }\p Y,\quad \sigma\vece\parallel\vece \mbox{ on }\p B_a,\end{cases}
\end{equation}
quadratic duality,  integration by parts and  \eqref{eq.der-area} imply
that
\begin{multline*}
\F_{per}(F,\psi)\ge\sup_{\sigma,\xi,t}\left\{\left(\int_{Y\setminus\ol B_a}\sigma\, dy\right)\cdot F+\int_{Y\setminus\ol B_a} \sigma\cdot \bE \psi\, dy 
\right.\\ 
+\frac{2\gamma}3
\int_{\p B_a}\xi\cdot\nabla_\tau P^2_a  ((\psi+Fy)\cdot\vece)\, d\H+\frac{1}{|B_a|}\left(\lfl-\frac{2\gamma}{3a}\right) t\left(\int_{\p B_a}  (\psi+Fy)\cdot\vece\,d\H\right)\\\left.-\frac12
\int_{Y\setminus\ol B_a}  Q^{-1}(\sigma)\, dy-
\frac\gamma3 \int_{\p B_a}|\xi|^2\, d\H-\frac{1}{2|B_a|}\left(\lfl-\frac{2\gamma}{3a}\right)t^2\right\}
\\
={\sup_{\sigma,\xi,t}}\left\{\left(\int_{Y\setminus\ol B_a}\sigma\, dy+\int_{\p  B_a}(\sigma\vece)\otimes y\, d\H\right)\cdot F-\int_{\p B_a}(\sigma\vece\cdot\vece) ((\psi+Fy)\cdot\vece)\,d\H\right.\\ \left.
+\frac{2\gamma}{3}
\int_{\p B_a}\left(-\dive_\tau\xi+\frac2a\xi\cdot\vece\right) P^2_a  ((\psi+Fy)\cdot\vece)\, d\H\right.\\
+\frac{1}{|B_a|}\left(\lfl-\frac{2\gamma}{3a}\right) t\left(\int_{\p B_a}  (\psi+Fy)\cdot\vece\,d\H\right)-\frac12
\int_{Y\setminus\ol B_a}  Q^{-1}(\sigma)\, dy\\\left.
-\frac\gamma3 \int_{\p B_a}|\xi|^2\, d\H-\frac{1}{2|B_a|}\left(\lfl-\frac{2\gamma}{3a}\right)t^2\right\}.
\end{multline*}
In the right handside of the equality above, the term  $\int_{\p B_a}\dive_\tau\xi\; P^2_a((\psi+Fy)\cdot\vece)\,d\H$ should be understood as a duality product between $H^1(\partial B_a)$ and its dual.

Taking the infimum in $\psi$ in the previous inequality and using that $\inf_\psi\sup_{\sigma,\xi,t}\ge\sup_{\sigma,\xi,t}\inf_\psi$ yields
\begin{multline}\label{eq.ineq-bd1}
\frac12 \Aah F\cdot F\ge\sup_{\sigma,\xi,t}\inf_\psi\left\{\left(\int_{Y\setminus\ol B_a}\sigma\, dy+\int_{\p  B_a}(\sigma\vece)\otimes y\, d\H\right)\cdot F\right.\\ \left.
-\int_{\p B_a}\!\!(\sigma\vece\cdot\vece)((\psi+Fy)\cdot\vece)\, d\H+
\frac{2\gamma}{3}
\int_{\p B_a}\!\!\left(\!\!-\dive_\tau\xi+\frac2a\xi\cdot\vece\right) P^2_a  ((\psi+Fy)\cdot\vece)\, d\H\right.\\
+\frac{1}{|B_a|}\left(\lfl-\frac{2\gamma}{3a}\right) t\left(\int_{\p B_a}  (\psi+Fy)\cdot\vece\,d\H\right)-\frac12
\int_{Y\setminus\ol B_a}  Q^{-1}(\sigma)\, dy\\\left.
-\frac\gamma3 \int_{\p B_a}|\xi|^2\, d\H-\frac{1}{2|B_a|}\left(\lfl-\frac{2\gamma}{3a}\right)t^2\right\}.
\end{multline}
{Given $(\sigma,\psi,t)$ the infimum at the right hand-side in \eqref{eq.ineq-bd1} is $-\infty$ unless the part that is linear in $\psi$ vanishes.
Therefore, the supremum can be restricted to those $(\sigma,\psi,t)$ for which this linear term is zero.
This is, in particular, the case} if $\xi$ and $t$ are such that
\begin{equation}\label{eq.cond-psi-tau}
\begin{cases}\ds
\int_{\p B_a} (-\dive_\tau\xi+\frac2a\xi\cdot\vece ) y\, d\H=0,\\[3mm]\ds
\frac{2\gamma}3( -\dive_\tau\xi+\frac2a\xi\cdot\vece) -\sigma\vece\cdot\vece+\frac{1}{|B_a|}\left(\lfl-\frac{2\gamma}{3a}\right) t=0 \mbox{ on }\p B_a.
\end{cases}
\end{equation}

{We do not know  how to optimally exploit \eqref{eq.ineq-bd1} with the restrictions \eqref{eq.cond-sigma}, \eqref{eq.cond-psi-tau} on $(\sigma,\psi,t)$ as a possible way to demonstrate enhancement for general $\Aa$'s or $F$'s. We propose instead to illustrate enhancement  in the specific case of an isotropic elastomer, i.e.,
$
\Aa_{ijkh}:= \lambda \delta_{ij} \delta_{kh}+\mu (\delta_{ik} \delta_{jh}+\delta_{ih} \delta_{jk}),
$
where $\lambda,\mu$ stand for the Lam\'e constants, and for an axisymmetric shear   strain $F=F(f)$
with
$$
F(f):=-\frac{f}2(\vec{e}_1\otimes\vec{e}_1+\vec{e}_2\otimes\vec{e}_2)+f\vec{e}_3\otimes\vec{e}_3.
$$
Then $1/2\Aa F(f)\cdot F(f)$ is $3/2\mu f^2$. Furthermore we will do so in the dilute limit, that is when
$
a\searrow 0
$.

\vskip1cm

We thus restrict $\sigma$, $\xi$, and $t$ to be of the form
\begin{align}
&\sigma(y)=
\begin{cases}
\sigma_{ij}(y)\vec{e}_i\otimes\vec{e}_j & \mbox{ in }S_{b}=\{y: a<|y|<b\}, \vspace{0.15cm}\\
\overline{\sigma}=\overline{\sigma}_{11}\left(\vec{e}_1\otimes\vec{e}_1+\vec{e}_2\otimes\vec{e}_2\right)+\overline{\sigma}_{33}\vec{e}_3\otimes\vec{e}_3 & \mbox{ in }Y\setminus \overline{S}_{b},
\end{cases} \nonumber\\
&\xi(y)=\beta_7\left(-\frac{y_1 y_3^2}{a^3}\vec{e}_1-\frac{y_2 y_3^2}{a^3}\vec{e}_2+\left(\frac{y_3}{a}-\frac{y_3^3}{a^3}\right)\vec{e}_3\right),\nonumber\\
&t=\beta_8, \label{S-xi-t}
\end{align}
with components
\begin{align}
\sigma_{11}(y)&=\alpha_1+\alpha_2 y_1^2+\alpha_3 y_3^2+\alpha_4 y_1^2 y_3^2,& \sigma_{12}(y)&=\sigma_{21}(y)=\alpha_2 y_1 y_2+\alpha_4 y_1 y_2 y_3^2, \nonumber\\
\sigma_{13}(y)&=\sigma_{31}(y)=\alpha_5 y_1 y_3+\alpha_4 y_1 y_3^3,& \sigma_{22}(y)&=\alpha_1+\alpha_2 y_2^2+\alpha_3 y_3^2+\alpha_4 y_2^2 y_3^2,\nonumber\\
\sigma_{23}(y)&=\sigma_{32}(y)=\alpha_5 y_2 y_3+\alpha_4 y_2 y_3^3,& \sigma_{33}(y)&=\alpha_6+\alpha_7 y_3^2+\alpha_4 y_3^4,\label{sij}
\end{align}
where $\alpha_{1}$ through $\alpha_{7}$ (which are functions of $|y|$) and $\beta_{7}$ and $\beta_{8}$ (which are constants) are spelled out in Appendix B, and where  $\overline{\sigma}_{11}$, $\overline{\sigma}_{33}$ are two arbitrary constants.  It can be checked that the fields $\sigma, \xi, t$ defined above  satisfy {\eqref{eq.cond-sigma} and \eqref{eq.cond-psi-tau}.}

The first equality in \eqref{S-xi-t} corresponds to the stress field in a spherical shell of inner radius $a$ and outer radius $b$ made of an elastic material with elasticity $\Aa$   containing a liquid  with bulk modulus $\lfl$; the solid/liquid interface $r=a$  is endowed with a surface tension $\gamma$. The outer boundary $r=b$ is subject to the affine traction $\ol\sigma\vece$. The choices  for $\xi$ and $t$ in \eqref{S-xi-t} correspond to the traction fields at the interface of the liquid inclusion with the spherical shell in the same problem. Note that the resulting displacement field on the outer boundary of the spherical {shell} is of the form $F(\bar f) y$ for some $\bar f$, which motivates our choice of $\sigma$.

The second equality in \eqref{S-xi-t} corresponds to an affine extension of the stress field in the complement of $B_{b}$ in the unit cell $Y$.

Rather cumbersome but straightforward calculations ensue. First, we    use  \eqref{S-xi-t} in \eqref{eq.ineq-bd1}, so that the terms that are linear in $\psi+Fy$ cancel out. The result is a concave polynomial of degree two in  $\overline{\sigma}_{11}$ and $\overline{\sigma}_{33}$. We compute its maximum in $\overline{\sigma}_{11}$ and $\overline{\sigma}_{33}$. Next we  go to the dilute  case, letting $\theta:=4\pi a^3/3$  tend to $0$. We then obtain the following fully explicit bound:
\begin{multline}\label{eq.ineq-bd3}
\frac12 \Aah F\cdot F\geq \frac{3\mu}{2}f^2\left(1+
\underbrace{\frac{15\mu(\lambda+2\mu) (\gamma/(2\mu a)-1)}{14\mu +9 \lambda+(34 \mu+15\lambda)\gamma/(2\mu a)}}\ \theta\right)+O(\theta^2).\\
(*)\hskip5.28cm
\end{multline}
If $\gamma/\mu>2a$, expression $(*)$ in \eqref{eq.ineq-bd3} will be positive. We conclude that the following holds true.
\begin{proposition}\label{prop.enhance} Consider an isotropic elastomer with Lam\'e coefficients $\lambda, \mu$. If $\gamma/\mu >2a$, then, in the dilute limit $\theta\searrow 0$  ($\theta$ being the volume fraction of the fluid filled cavities),  enhancement will occur for axisymmetric shear strains of the form $F=-f/2(\vec{e}_1\otimes\vec{e}_1+\vec{e}_2\otimes\vec{e}_2)+f\vec{e}_3\otimes\vec{e}_3$, that is,
\begin{equation*}
\frac12\Aah F\cdot F>\frac12\Aa F\cdot F=\frac{3\mu}{2}f^2.
\end{equation*}
\end{proposition}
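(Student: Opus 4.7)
The plan is to leverage the dual inequality \eqref{eq.ineq-bd1} by choosing a highly structured test triplet $(\sigma,\xi,t)$ that is in fact the stress/interface data of the classical \emph{single-inclusion spherical-shell problem}: an elastic shell $\{a<|y|<b\}$ filled with a pressurized liquid core, carrying surface tension on $\{|y|=a\}$, and loaded by an axisymmetric traction $\ol\sigma\vece$ on $\{|y|=b\}$, while being extended by an affine stress state $\ol\sigma$ on $Y\setminus \ol B_b$. The two scalars $\ol\sigma_{11}$, $\ol\sigma_{33}$ are left free; all other coefficients $\alpha_1,\dots,\alpha_7,\beta_7,\beta_8$ are determined by solving this sub-problem in closed form and are collected in Appendix~B.

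First I would verify admissibility. A direct computation using the spherical symmetry of the $\alpha_i$ shows $\dive\sigma=0$ in the shell and in $Y\setminus\ol B_b$, and checks that $\sigma\vece\parallel\vece$ on $\p B_a$ together with the anti-periodicity of $\sigma\nu$ on $\p Y$ (both because $\sigma$ is constant and diagonal outside $B_b$). Then one checks the compatibility conditions \eqref{eq.cond-psi-tau}: the vanishing of $\int_{\p B_a}(-\dive_\tau\xi+\frac{2}{a}\xi\cdot\vece)\, y\, d\H$ is immediate from the odd parity of the chosen $\xi$, while the pointwise relation on $\p B_a$ is exactly the surface equilibrium equation that was used to construct $\xi,t$ from $\sigma$ in the shell problem. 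Once \eqref{eq.cond-psi-tau} holds, the $\psi$-linear contribution in \eqref{eq.ineq-bd1} cancels identically and the infimum over $\psi$ disappears.

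Next I would plug $(\sigma,\xi,t)$ and $F=F(f)$ into the remaining (purely $\bar\sigma$-dependent) expression. Because $Q^{-1}$ is positive definite and $t^2$ enters with a positive coefficient (using \eqref{eq.cond-min}), the result is a strictly concave quadratic in $(\ol\sigma_{11},\ol\sigma_{33})$; setting its two partial derivatives to zero yields a linear $2{\times}2$ system, which I would solve, producing a lower bound for $\tfrac12\Aah F\cdot F$ that is a rational function of $a,\lambda,\mu,\gamma,\lfl$ and of $f^2$. Substituting $\theta=\tfrac{4\pi}{3}a^3$ as the small parameter and expanding each integral over the shell and over $\p B_a$ in powers of $a$, the leading term reproduces $\tfrac{3\mu}{2}f^2$ (the unperforated response) and the $O(\theta)$ correction simplifies, after collecting terms, to the explicit bracket displayed in \eqref{eq.ineq-bd3}.

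The conclusion is then immediate: the sign of the $O(\theta)$ coefficient equals the sign of $\gamma/(2\mu a)-1$, since the denominator $14\mu+9\lambda+(34\mu+15\lambda)\gamma/(2\mu a)$ is always strictly positive for $\lambda,\mu,\gamma,a>0$. Hence $\gamma/\mu>2a$ forces $\tfrac12\Aah F\cdot F>\tfrac{3\mu}{2}f^2=\tfrac12\Aa F\cdot F$ for $\theta$ small enough, which is exactly the enhancement statement of Proposition \ref{prop.enhance}. The true obstacle is not conceptual but bookkeeping: constructing the seven $\alpha_i$'s so that \eqref{eq.cond-sigma}--\eqref{eq.cond-psi-tau} hold simultaneously amounts to inverting the closed-form shell problem, and extracting the dilute expansion requires tracking several cancellations at order $a^3$ before the clean formula \eqref{eq.ineq-bd3} emerges.
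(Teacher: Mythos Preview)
Your proposal is correct and follows essentially the same route as the paper: both use the dual lower bound \eqref{eq.ineq-bd1} with the explicit spherical-shell stress field extended affinely to $Y\setminus\ol B_b$, verify \eqref{eq.cond-sigma}--\eqref{eq.cond-psi-tau} so that the $\psi$-dependence drops out, maximize the resulting concave quadratic in $(\ol\sigma_{11},\ol\sigma_{33})$, and then read off the sign of the $O(\theta)$ correction in \eqref{eq.ineq-bd3}. Your parity argument for the first condition in \eqref{eq.cond-psi-tau} and your identification of the computation as ``bookkeeping'' match the paper's own description of ``rather cumbersome but straightforward calculations''; the only slight imprecision is that $\sigma\vece\parallel\vece$ on $\p B_a$ follows from the specific shell coefficients $\alpha_i$, not from the constancy of $\sigma$ outside $B_b$.
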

 The presence of liquid inclusions leads to a stiffer elasticity than that of the elastomer, in spite of the fact that the inclusions have zero shear resistance.

\begin{remark} 
A similar computation could be performed for uniaxial strains of the form $f\vec e\otimes\vec  e$ with $|\vec e\,|=1$.
In that case enhancement can also be achieved but at the expense of choosing both $\gamma$ large with respect to $\mu$  {\it and} $\lfl$  much larger than $\gamma$.

For large $\gamma$'s and still larger $\lfl$, we expect enhancement for general elasticities and strains, but the technicalities involved in deriving a useful lower bound for the homogenized energy remain intractable at present.
\hfill\P\end{remark}

\begin{remark} In the case of  rigid inclusions the homogenized tensor is given by
 \begin{multline*}
 \frac12\Aa^{\rm rigid}F\cdot F\\[2mm] 
 =\min\left\{\frac12 \int_{Y\setminus\ol B_a}  Q(\bE \psi+F)\, dx: \ \psi \in H^1_\sharp(Y\setminus\ol B_a;\R^3),\ \psi=-Fy \mbox{ on }\p B_a\right\}.
 \end{multline*}
 In view of the formula \eqref{eq.def-Ah} for $\Aah$ in the present setting,
$$ 
\frac12 \Aa^{\rm rigid}F\cdot F > \frac12\Aah F\cdot F,
 $$
independently of the values of $\gamma$ and $\lfl$.

Thus, while surface tension on many small liquid inclusions can surprisingly enhance elasticity, it  cannot compete with rigid inclusion, as expected.
\hfill\P\end{remark}

}

\section*{Acknowledgements}

\noindent Support for this work by the National Science Foundation through the Grant DMREF--1922371 is gratefully acknowledged by GAF and OLP. JCD acknowledges support by the project  PID2020-116809GB-I00
of the  Ministerio de Ciencia e Innovaci\'on.
MGM acknowledges support from MIUR--PRIN 2017. MGM is a member of GNAMPA--INdAM.

\setcounter{equation}{0}
\setcounter{section}{1}

\renewcommand{\thesection}{\Alph{section}}

\section*{Appendix A}

In this appendix we review some variants of Poincar\'e's inequality on the boundary of the unit sphere, that are instrumental in the proofs of our main results.

Consider the spectral decomposition of $H^1(\partial B_1)$ associated with the eigenvalues $\mu_\ell$, $\ell\geq 0$, of the Laplace-Beltrami operator on $\partial B_1$, i.e., the solutions in $H^1(\partial B_1)$ of
$$
\int_{\partial B_1} \nabla_\tau \varphi\cdot\nabla_\tau \psi\,d\H=\mu_\ell\int_{\partial B_1} \varphi\psi\,d\H \quad \text{ for every } \psi\in H^1(\partial B_1).
$$
It is well known that $\mu_\ell=\ell(\ell+1)$, $\ell\geq0$ and that the space $V^\ell$ of eigenvectors relative to $\mu_\ell$ has dimension $2\ell+1$ (see \cite[Proposition~4.5]{EF}).
We are especially interested in the space $V^0$, which is given by constant functions, and in the space $V^1$, which is given by the restrictions to $\partial B_1$ of the linear functions in $\RR^3$.

Denote by $P^0$, $P^1$ the orthogonal projections in $L^2(\partial B_1)$ onto the spaces $V^0$ and $V^1$, respectively.
Setting
\begin{equation}\label{proj}
P^2:=I-P^0-P^1, \quad V^2:= (V^0+V^1)^\bot,
\end{equation}
 and recalling that $\mu_1=2$, $\mu_2=6$, we have
$$
\|\varphi\|_{L^2(\p B_1)}^2=\|P^0\varphi\|^2_{L^2(\p B_1)}+\|P^1\varphi\|^2_{L^2(\p B_1)}+\|P^2 \varphi\|^2_{L^2(\p B_1)},
$$
$$
\|\nabla_\tau\varphi\|_{L^2(\p B_1)}^2=\|\nabla_\tau P^1\varphi\|^2_{L^2(\p B_1)}+\|\nabla_\tau P^2 \varphi\|^2_{L^2(\p B_1)},
$$
and
\begin{eqnarray}\label{PWC}
\int_{\partial B_1}|\nabla_\tau P^1\varphi|^2\,d\H & = & 2\int_{\partial B_1} |P^1 \varphi|^2\,d\H,
\\\label{PWC2}
\int_{\partial B_1}|\nabla_\tau P^2 \varphi|^2\,d\H & \geq & 6\int_{\partial B_1} |P^2 \varphi|^2\,d\H
\end{eqnarray}
for every $\varphi\in H^1(\partial B_1)$.
Combining the previous results, we have
\begin{eqnarray*}
\int_{\p B_1}|\varphi|^2\, d\H & \leq & \|P^0\varphi\|^2_{L^2(\p B_1)} + \frac12 \int_{\partial B_1}|\nabla_\tau P^1\varphi|^2\,d\H +\frac16
\int_{\partial B_1}|\nabla_\tau P^2 \varphi|^2\,d\H
\\
& \leq & \frac1{4\pi}\left(\int_{\partial B_1}\varphi\, d\H\right)^2 +\frac12 \int_{\partial B_1}|\nabla_\tau \varphi|^2\, d\H ,
\end{eqnarray*}
where we used that
$$
P^0(\varphi)={\frac1 4\pi}\int_{\partial B_1}\varphi\,d\H.
$$
A simple scaling argument shows that
\begin{equation}\label{PW-sphere}
\int_{\partial B_r}|\varphi|^2\, d\H \leq \frac{r^2}2\int_{\partial B_r}|\nabla_\tau \varphi|^2\, d\H
+ \frac1{4\pi r^2}\left(\int_{\partial B_r}\varphi\, d\H\right)^2
\end{equation}
for every $\varphi\in H^1(\partial B_r)$ and $r>0$.

In Section \ref{sec.hom} we need a refinement {\color{black} of} \eqref{PW-sphere}  established in what follows.

Using the change of variables $x=\e i+ \e a y$, which transforms $\partial B_1$ into $\partial \Bie$,
we deduce that  $H^1(\partial \Bie)$ decomposes as the orthogonal sum of
$V_{i,\e a}^\ell=\{\psi((x-\e i)/\e a):\ \psi\in V^\ell\}.$
Denoting by
\beq\label{proj-e}P^\ell_{i,\e a} \mbox{  the orthogonal projection of }L^2(\partial \Bie)
\mbox{ onto  }V_{i,\e a}^\ell,\eeq
 we have, as above,
$$
\|\varphi\|_{L^2(\p \Bie)}^2=\|P^0_{i,\e a} \varphi\|^2_{L^2(\p \Bie)}+\|P^1_{i,\e a} \varphi\|^2_{L^2(\p \Bie)}+\|P^2_{i,\e a} \varphi\|^2_{L^2(\p \Bie)},
$$
$$
\|\nabla_\tau \varphi\|_{L^2(\p \Bie)^3}^2=\|\nabla_\tau P^1_{i,\e a}\varphi\|^2_{L^2(\p \Bie)}+\|\nabla_\tau P^2_{i,\e a} \varphi\|^2_{L^2(\p \Bie)},
$$
and
\begin{equation}\label{ecdt3}
 \e^2 a^2\int_{\partial \Bie}|\nabla_\tau P^1_{i,\e a} \varphi |^2\,d\H  =  2\int_{\partial \Bie} |P^1_{i,\e a} u|^2\, d\H
 \end{equation}

\begin{equation}\label{ecdt3-0}
\e^2 a^2\int_{\partial \Bie}|\nabla_\tau P^2_{i,\e a} \varphi |^2\, d\H  \geq  6\int_{\partial \Bie} |P^2_{i,\e a} \varphi|^2\, d\H
\end{equation}
for every $\varphi\in H^1(\p\Bie)$. Moreover,
\begin{equation}\label{ecdt4}
P^0_{i,\e a} u={\frac 1 {4\pi \e^2 a^2}}\int_{\partial \Bie}u\,d\H.
\end{equation}
For $K>0$  let us define
$$
\mathcal V^i_{\e,K}(\varphi):=\frac{a^2 \ep^2}{2}\int_{\partial \Bie}|\nabla_\tau \varphi|^2\,d\H+\frac{K}{\ep^2}\left(\int_{\partial \Bie}\varphi\,d\H\right)^2-\int_{\partial \Bie}|\varphi|^2\,d\H
$$
for every $\varphi\in H^1(\partial \Bie)$.
By \eqref{ecdt3} and \eqref{ecdt4} we deduce that
\begin{eqnarray}
\mathcal V^i_{\e,K}(\varphi) & = & \frac{a^2 \ep^2}{2}\int_{\partial \Bie}|\nabla_\tau P^1_{i,\e a} \varphi |^2\,d\H
+\frac{a^2 \ep^2}{2}\int_{\partial \Bie}|\nabla_\tau P^2_{i,\e a} \varphi |^2\,d\H
\nonumber \\
&& {}+ \frac{K}{\ep^2}\left(\int_{\partial \Bie}\varphi\,d\H\right)^2
-\frac1{4\pi a^2\ep^2}\left(\int_{\partial \Bie}\varphi \, d\H\right)^2
\nonumber \\
&& {}
-\int_{\partial \Bie}|P^1_{i,\e a} \varphi|^2\,d\H-\int_{\partial \Bie}|P^2_{i,\e a} \varphi|^2\,d\H
\nonumber\\
& = &  \frac{a^2 \ep^2}{2}\int_{\partial \Bie}|\nabla_\tau P^2_{i,\e a} \varphi |^2\,d\H
+\frac1{\ep^2}\Big(K - \frac1{4\pi a^2}\Big)\left(\int_{\partial \Bie} \varphi \,d\H\right)^2
\nonumber \\
&& {}-\int_{\partial \Bie}|P^2_{i,\e a} \varphi|^2 \, d\H,
\label{identity}
\end{eqnarray}
hence by \eqref{ecdt3-0}
$$
\mathcal V^i_{\e,K}(\varphi) \geq \frac{a^2\ep^2}{3}\int_{\partial \Bie}|\nabla_\tau P^2_{i,\e a} \varphi|^2\,d\H
+\frac1{\ep^2}\Big(K- \frac1{4\pi a^2}\Big)\left(\int_{\partial \Bie}\varphi\,d\H\right)^2.
$$

Thus, in our setting (see \eqref{Vei-var-0}), the following coercivity estimate holds upon taking $K={\lfl a^2/ (2\gamma |B_a|)}$,
\begin{multline}\label{coer}
\frac{\gamma\e}2\int_{\p \Bie}|\nabla_\tau (v\cdot\vece)|^2\, d\H
-\frac{\gamma}{\e a^2}\int_{\p \Bie}  (v\cdot\vece)^2 \, d\H
+\frac{\lfl }{2\e^3|B_a|} \left(\int_{\p \Bie}  v\cdot\vece\,d\H\right)^2\\[2mm]
\ge \frac{\gamma \e}3 \int_{\p \Bie}|\nabla_\tau (P^2_{i,\e a}(v\cdot\vece))|^2\, d\H+ \frac1{4\pi a^3\e^3}\left(\frac32\lfl- \frac{\gamma}{a}\right)\left(\int_{\p \Bie}  v\cdot\vece\,d\H\right)^2.
\end{multline}

\section*{Appendix B}

The functions $\alpha_{1}$ through $\alpha_{7}$ in the components (\ref{sij}) and the constants $\beta_7$ and $\beta_8$ in relations (\ref{S-xi-t}) were computed using Wolfram Mathematica software. They read as
\begin{align*}
\alpha_1=&\frac{15 \beta_1 \mu \lambda |y|^2}{\mu+\lambda}-2 \beta_2 \mu+\beta_5 (2 \mu+3 \lambda)+\frac{2 \beta_6 \mu-\frac{10 \beta_3 \mu^2}{\mu+\lambda}}{|y|^3}+\frac{3 \beta_4 \mu}{|y|^5},\\
\alpha_2=&-\frac{12 \beta_1 \mu \lambda}{\mu+\lambda}+\frac{3 \mu \left(\frac{2 \beta_3 (5 \mu+3 \lambda)}{\mu+\lambda}-2 \beta_6\right)}{|y|^5}-\frac{15 \beta_4 \mu}{|y|^7},\\
\alpha_3=&-\frac{3 \beta_1 \mu (14 \mu+25 \lambda)}{\mu+\lambda}+\frac{18 \beta_3 \mu^2}{|y|^5 (\mu+\lambda)}-\frac{15 \beta_4 \mu}{|y|^7},\\
\alpha_4=&\frac{105 \beta_4 \mu}{|y|^9}-\frac{90 \beta_3 \mu}{|y|^7},\\
\alpha_5=&\frac{6 \beta_1 \mu \lambda}{\mu+\lambda}+\frac{\frac{6 \beta_3 \mu (5 \mu+6 \lambda)}{\mu+\lambda}-6 \beta_6 \mu}{|y|^5}-\frac{45 \beta_4 \mu}{|y|^7},\\
\alpha_6=&\frac{3 \beta_1 \mu |y|^2 (14 \mu+15 \lambda)}{\mu+\lambda}+4 \beta_2 \mu+\beta_5 (2 \mu+3 \lambda)+\frac{\frac{2 \beta_3 \mu^2}{\mu+\lambda}+2 \beta_6 \mu}{|y|^3}+\frac{9 \beta_4 \mu}{|y|^5},\\
\alpha_7=&-\frac{3 \beta_1 \mu (14 \mu+17 \lambda)}{\mu+\lambda}-\frac{6 \mu (\beta_6 (\mu+\lambda)-\beta_3 (8 \mu+9 \lambda))}{|y|^5 (\mu+\lambda)}-\frac{90 \beta_4 \mu}{|y|^7},
\end{align*}
and
\begin{align*}
{\beta_7=6 \beta_2+\frac{18 a^7 \beta_1 \lambda+6 a^2 \beta_3 (5 \mu+3 \lambda)-9 \beta_4 (\mu+\lambda)}{a^5 (\mu+\lambda)},\,\;\,\;\beta_8=3 \left(\frac{\beta_6}{a^3}+\beta_5\right)|B_a|}
\end{align*}
with
\begin{align*}
\beta_1=&k_1(\overline{\sigma}_{11}-\overline{\sigma}_{33}),\,\beta_2=k_2(\overline{\sigma}_{11}-\overline{\sigma}_{33}),\,\beta_3=k_3(\overline{\sigma}_{11}-\overline{\sigma}_{33}),
\,\beta_4=k_4(\overline{\sigma}_{11}-\overline{\sigma}_{33}),\\
\beta_5=&k_5(2\overline{\sigma}_{11}+\overline{\sigma}_{33}),\,\beta_6=k_6(2\overline{\sigma}_{11}+\overline{\sigma}_{33}),
\end{align*}
where $k_1$ through $k_{6}$ are coefficients explicitly known in terms of $\lambda$, $\mu$, $\lfl$, $\gamma$, $a$, and $b$. They read as
\begin{align*}
k_1=&-20 a^3 b^3 K^{-1} (\mu+\lambda) \left[2 a^3 \mu (\mu+\lambda)+a^2 \gamma \mu-2 a b^2 \mu (\mu+\lambda)+b^2 \gamma (\mu+\lambda)\right],\\
k_2=&\frac{1}{6} b^3 K^{-1} \left[50 a^8 \mu \left(28 \mu^2+56 \mu \lambda+27 \lambda^2\right)+200 a^7 \gamma \left(7 \mu^2+11 \mu \lambda+3 \lambda^2\right)-\right.\\
    & 1008 a^6 b^2 \mu (\mu+\lambda)^2-504 a^5 b^2 \gamma \mu
    (\mu+\lambda)-2 a b^7 \mu (14 \mu+9 \lambda) (14 \mu+19 \lambda)-\\
    &\left.b^7 \gamma (34 \mu+15 \lambda) (14 \mu+19 \lambda)\right],\\
k_3=&\frac{5}{6} a^3 b^3 K^{-1} (\mu+\lambda) \left[2 a^8 \mu (14 \mu+19 \lambda)-8 a^7 \gamma (7 \mu+5 \lambda)-2 a b^7 \mu (14 \mu+19 \lambda)+\right.\\
    &\left. b^7 \gamma (14 \mu+19 \lambda)\right],\\
k_4=&a^5 b^5 K^{-1} \left[2 a^6 \mu (\mu+\lambda) (14 \mu+19 \lambda)-8 a^5 \gamma (\mu+\lambda) (7 \mu+5 \lambda)-\right.\\
    &\left. 2 a b^5 \mu (\mu+\lambda) (14 \mu+19 \lambda)-b^5 \gamma \mu (14 \mu+19
   \lambda)\right],
\end{align*}
\begin{align*}
k_5=&\frac{b^3 (2 \gamma-4 a \mu-3 a \lfl)}{12 a^4 \mu (2 \mu+3 \lambda-3 \lfl)+24 a^3 \gamma \mu-3 a b^3 (2 \mu+3 \lambda) (4 \mu+3 \lfl)+6 b^3 \gamma (2 \mu+3 \lambda)},\\
k_6=&\frac{-a^3 b^3 [2\gamma+a (2 \mu+3 \lambda-3 \lfl)]}{3 \left(4 a^4 \mu (2 \mu+3 \lambda-3 \lfl)+8 a^3 \gamma \mu-a b^3 (2 \mu+3 \lambda) (4 \mu+3 \lfl)+2 b^3 \gamma (2 \mu+3
   \lambda)\right)},\\
\end{align*}
where
\begin{align*}
K=&\mu \left[2 a^{11} \mu (14 \mu+9 \lambda) (14 \mu+19 \lambda)-8 a^{10} \gamma (7 \mu+5 \lambda) (14 \mu+9 \lambda)-\right.\\
  &50 a^8 b^3 \mu \left(28 \mu^2+56 \mu \lambda+27 \lambda^2\right)-200 a^7 b^3 \gamma
   \left(7 \mu^2+11 \mu \lambda+3 \lambda^2\right)+\\
   &2016 a^6 b^5 \mu (\mu+\lambda)^2+1008 a^5 b^5 \gamma \mu (\mu+\lambda)-50 a^4 b^7 \mu \left(28 \mu^2+56 \mu \lambda+27 \lambda^2\right)+\\
   & 25 a^3
   b^7 \gamma \left(28 \mu^2+56 \mu \lambda+27 \lambda^2\right)+2 a b^{10} \mu (14 \mu+9 \lambda) (14 \mu+19 \lambda)+\\
   &\left. b^{10} \gamma (34 \mu+15 \lambda) (14 \mu+19 \lambda)\right].
\end{align*}

\end{document}